\newcolumntype{?}{!{\vrule width 1pt}}
\newtheorem{theo}{Theorem}
\newtheorem{lem}{Lemma}
\newtheorem{claim}[lem]{Claim}
\newtheorem{coro}[lem]{Corollary}
\newtheorem{define}{Definition}
\newtheorem{fact}[lem]{Fact}
\newtheorem{alg}{Algorithm}
\newtheorem{proc}{Procedure}
\newcommand{\BE}{\begin{enumerate}} \newcommand{\EE}{\end{enumerate}}
\newcommand{\BI}{\begin{itemize}} \newcommand{\EI}{\end{itemize}}
\newcommand{\BDes}{\begin{description}}\newcommand{\EDes}{\end{description}
}
\newcommand{\BT}{\begin{theo}} \newcommand{\ET}{\end{theo}}
\newcommand{\BL}{\begin{lem}} \newcommand{\EL}{\end{lem}}
\newcommand{\BD}{\begin{define}} \newcommand{\ED}{\end{define}}
\newcommand{\BCM}{\begin{claim}} \newcommand{\ECM}{\end{claim}}
\newcommand{\BC}{\begin{coro}} \newcommand{\EC}{\end{coro}}
\newcommand{\BA}{\begin{alg}} \newcommand{\EA}{\end{alg}}
\newcommand{\BP}{\begin{proc}} \newcommand{\EP}{\end{proc}}
\def\FullBox{\hbox{\vrule width 8pt height 8pt depth 0pt}}
\newcommand{\qed}{\;\;\;\FullBox}
\newenvironment{proof}{\noindent{\bf Proof:~~}}{\(\qed\)}
\newcommand{\BPF}{\begin{proof}} \newcommand {\EPF}{\end{proof}}
\newenvironment{proofof}[1]{\noindent{\bf Proof of {#1}:~~}}{\(\qed\)}
\newcommand{\BPFOF}{\begin{proofof}} \newcommand {\EPFOF}{\end{proofof}}
\newcommand{\BEQ}{\begin{equation}} \newcommand{\EEQ}{\end{equation}}
\newcommand{\BEQN}{\begin{eqnarray}}\newcommand{\EEQN}{\end{eqnarray}}
\newlength{\saveparindent}
\newlength{\saveparskip}
\newcommand{\R}{{ R}}
\newcommand{\RB}{R_\mathbb{B}}
\renewcommand{\span}{\textsf{span}}
\newtheorem{remark}{Remark}
\providecommand{\keywords}[1]{\textbf{\textit{Key Words---}} #1}
\providecommand{\subjclass}[1]{\textbf{\textit{Classification Codes---}} #1}
\newtheorem{propo}{Proposition}
\begin{document}

\title{Upper Bounds on the Boolean Rank of Kronecker Products}
\author{
Ishay Haviv\footnote{Research supported in part by the Israel Science Foundation (grant No.~1218/20).}\\
   The Academic College\\
    of Tel Aviv-Yaffo \\
    Tel Aviv, {\sc Israel}
   \and
    Michal Parnas \\
    The Academic College\\
    of Tel Aviv-Yaffo \\
    Tel Aviv, {\sc Israel} \\
    {\tt michalp@mta.ac.il}
}

\date{}

\maketitle

\abstract{
The {\em Boolean rank} of a $0,1$-matrix $A$, denoted $\RB(A)$, is the smallest number of monochromatic combinatorial rectangles needed to cover the $1$-entries of $A$.
In 1988, de Caen, Gregory, and Pullman asked if the Boolean rank of the Kronecker product $C_n \otimes C_n$ is strictly smaller than the square of $\RB(C_n)$,
where $C_n$ is the $n \times n$ matrix with zeros on the main diagonal and ones everywhere else (Carib.~Conf.~Comb.~\&~Comp.,~1988).
A positive answer was given by Watts for $n=4$ (Linear Alg. and its Appl.,~2001).
A result of Karchmer, Kushilevitz, and Nisan, motivated by direct-sum questions in non-deterministic communication complexity,
implies that the Boolean rank of $C_n \otimes C_n$ grows linearly in that of $C_n$ (SIAM J.~Disc.~Math.,~1995),
and thus $\RB(C_n \otimes C_n) < \RB(C_n)^2$ for every sufficiently large $n$.
Their proof relies on a probabilistic argument.

In this work, we present a general method for proving upper bounds on the Boolean rank of Kronecker products of $0,1$-matrices.
We use it to affirmatively settle the question of de Caen et al.~for all integers $n \geq 7$.
We further provide an explicit construction of a cover of $C_n \otimes C_n$, whose number of rectangles nearly matches the optimal asymptotic bound.
Our method for proving upper bounds on the Boolean rank of Kronecker products might find applications in different settings as well.
We express its potential applicability by extending it to the wider framework of {\em spanoids}, recently introduced by Dvir, Gopi, Gu, and Wigderson (SIAM J.~Comput.,~2020).
}\\

\section{Introduction}

The {\em Boolean rank} of a $0,1$-matrix $A$ of size $n \times m$, denoted $\RB(A)$, is the smallest number of monochromatic combinatorial rectangles needed to cover the $1$-entries of $A$.
Equivalently, $\RB(A)$ is the smallest integer $k$ for which $A = U \cdot V$ for two $0,1$-matrices $U$ and $V$ of sizes $n \times k$ and $k \times m$ respectively, where the operations are under Boolean arithmetic (namely, $0+x = x+0 = x$, $1+1=1$, $ 1\cdot 1 = 1$ and $x \cdot 0 = 0 \cdot x = 0$).
In terms of graph theory, $\RB(A)$ is the minimal number of bicliques needed to cover the edges of the bipartite graph whose `reduced' adjacency matrix is $A$.
Although the Boolean rank was studied extensively in various contexts over the years, its behavior is not fully understood, mainly because it is not defined over a field
(see, e.g., the survey~\cite{Monson} and the references therein). From a computational perspective, computing and even approximating the Boolean rank of a $0,1$-matrix are known to be NP-hard (see, e.g.,~\cite{orlin1977contentment,CHHK14}).

The Boolean rank of $0,1$-matrices plays a central role in the area of communication complexity.
In the two-party communication problem associated with a $0,1$-matrix $A$, a player Alice gets as input an index $i$ of a row of $A$
and a player Bob gets an index $j$ of a column. Their goal is to determine the value of $A_{i,j}$ using as few bits of communication as possible.
In the {\em non-deterministic} setting, the players act non-deterministically and their protocol should satisfy that $A_{i,j}=1$ if and only if there exists a transcript for which they output $1$.
It is well known that the Boolean rank of $A$ precisely measures the minimum number of transcripts needed to solve the problem.
Hence, the non-deterministic communication complexity of the problem associated with a matrix $A$ is $\lceil \log_2 \RB(A) \rceil$ (see, e.g.,~\cite{KN97}).

The current work is concerned with the Boolean rank of the Kronecker product $A \otimes B$ of $0,1$-matrices $A$ and $B$.
This product is defined as the block matrix whose $i,j$'th block is formed by multiplying the $i,j$'th entry $A_{i,j}$ of $A$ with the entire matrix $B$.
It is easy to verify that the Boolean rank is sub-multiplicative with respect to this product, that is,
\begin{eqnarray}\label{eq:sub_mul}
\RB(A \otimes B) \leq  \RB(A) \cdot \RB(B).
\end{eqnarray}
In 1988, it was asked by de Caen, Gregory, and Pullman~\cite{de1988boolean} whether the inequality can be strict (see also~\cite[Section~4.1]{Monson}).
As a candidate for strict inequality, they suggested to take $A$ and $B$ to be the $n \times n$ matrix $C_n$ that has zeros on the main diagonal and ones everywhere else
(note that $C_n$ is the `reduced' adjacency matrix of the {\em crown graph} on $2n$ vertices, defined as the complete bipartite graph $K_{n,n}$ from which the edges of a perfect matching have been removed).
They further proposed the problem of estimating the Boolean rank of $C_n \otimes C_n$ as a function of $n$.
The Boolean rank of $C_n$ itself was previously determined in~\cite{Caen2} using the celebrated Sperner's theorem. It was shown there that $R_{\mathbb{B}}(C_n) = \sigma(n)$,
where
\[\sigma(n) = \min \left\{k~ \Big | ~ n \leq \binom{k}{\lceil k/2\rceil}\right\}.\]
Note that $\sigma(n) = (1+o(1)) \cdot \log_2 n$.

A standard tool for proving lower bounds on the Boolean rank of $0,1$-matrices is based on the notion of {\em isolation sets} (also known as {\em fooling sets}).
An isolation set in a $0,1$-matrix $A$ is a subset of its $1$-entries, such that no two of them are in the same row or column of $A$, and no two of them belong to an all-one $2\times 2$ sub-matrix of $A$.
Since no two ones of an isolation set can belong to the same monochromatic rectangle, it follows that $R_{\mathbb{B}}(A) \geq i(A)$ for any $0,1$-matrix $A$,
where $i(A)$ denotes the largest size of an isolation set in $A$ (see, e.g.,~\cite{KN97}; see also~\cite{Watts06} for a somewhat better lower bound based on a fractional variant of the Boolean rank).
It was observed in~\cite{de1988boolean} that $R_{\mathbb{B}}(A \otimes B) \geq \max\{i(A) \cdot R_{\mathbb{B}}(B),R_{\mathbb{B}}(A) \cdot i(B)\}$.
Since $i(C_n)=3$ for every $n \geq 3$  (see, e.g.,~\cite{de1988boolean}), this implies that  $\RB(C_n \otimes C_n) \geq 3 \cdot \RB(C_n)$ for every such $n$.

In 2001, Watts~\cite{watts2001boolean} proved that $\RB(C_4 \otimes C_4) = 12$, which by $\RB(C_4) = \sigma(4) = 4$ implies that the inequality in~\eqref{eq:sub_mul} is strict for $A=B=C_4$.
Watts further observed that this example is minimal in terms of dimension.
As for the asymptotic behavior of the Boolean rank of $C_n \otimes C_n$, a probabilistic argument of Karchmer, Kushilevitz, and Nisan~\cite{karchmer1995fractional} implies that there exists a constant $c>0$, such that for every sufficiently large integer $n$ it holds that $\RB(C_n \otimes C_n) \leq c \cdot \log_2 n$. This, in particular, yields that the inequality in~\eqref{eq:sub_mul} is strict for $A=B=C_n$ whenever $n$ is sufficiently large.
While the result of~\cite{karchmer1995fractional} is more general, for the matrix $C_n \otimes C_n$ their bound is essentially obtained by uniformly picking random rectangles of ones in $C_n \otimes C_n$ with maximum size.
Since these rectangles cover the $1$-entries of $C_n \otimes C_n$ evenly, and since each of them covers a constant fraction of its $O(n^4)$ $1$-entries, $O(\log_2 n)$ such rectangles suffice to cover the matrix $C_n \otimes C_n$.

The study of the Boolean rank of Kronecker products of $0,1$-matrices is motivated by direct-sum questions in communication complexity (see, e.g.,~\cite{FederKNN95,karchmer1995fractional} and~\cite[Chapter~4]{KN97}).
Indeed, the matrix $A \otimes A$ is associated with the communication complexity problem in which Alice gets as input two indices $i_1,i_2$ of rows of $A$, Bob gets two indices $j_1,j_2$ of columns of $A$, and their goal is to determine $A_{i_1,j_1} \cdot A_{i_2,j_2}$.
Equivalently, Alice and Bob get two instances of the problem associated with $A$ and their goal is to decide if {\em both} of them are yes-instances (that is, to compute the AND operation applied to the outcomes of the two instances).

Consider for example the matrix $C_n$, and observe that it represents the two-party communication problem of non-equality over a domain of size $n$. The aforementioned result of~\cite{karchmer1995fractional} interestingly implies that the non-deterministic communication complexity required for determining the AND of two instances of this problem is larger only by an absolute constant than that of a single instance.
In fact, the results of~\cite{karchmer1995fractional} extend to the more general scenario where the number of instances that the players get is arbitrary and their goal is to decide if all of them are yes-instances.
It turns out that when the number of instances grows to infinity, the {\em amortized} non-deterministic communication complexity is determined by a {\em fractional} variant of the Boolean rank.
For the matrix $C_n$, the latter is bounded from above by a constant independent of $n$ (see~\cite[Section~2.3 and Example~2]{karchmer1995fractional} and~\cite[Chapters~2.4 and~4.1.2]{KN97}).

\subsection{Our Contribution}

In this work, we present a general method for proving upper bounds on the Boolean rank of Kronecker products of $0,1$-matrices, as described in the following theorem.
Here, a collection of $0,1$-matrices is said to {\em cover} a matrix if their sum under Boolean arithmetic is equal to that matrix.

\BT\label{theorem:RankTensorIntro}
Let $A$ and $B$ be two $0,1$-matrices.
Suppose that there exist two families of $0,1$-matrices, $\mathcal{M} = \{M_{1},...,M_{s}\}$ and $\mathcal{N} = \{N_{1},...,N_{s}\}$, such that
\begin{enumerate}
  \item $\mathcal{M}$ is a cover of $A$, and
  \item for every $i,j$ such that $A_{i,j}=1$, the matrices $N_t$ with ${(M_t)}_{i,j} = 1$ form a cover of $B$.
\end{enumerate}
Then,
\[\RB(A \otimes B) \leq \sum_{t=1}^{s} \RB(M_{t}) \cdot \RB(N_{t}).\]
\ET
We remark that here and throughout the paper, we allow repetitions in the families $\mathcal{M}$ and $\mathcal{N}$, which thus can be viewed as sequences of not necessarily distinct matrices.

Theorem~\ref{theorem:RankTensorIntro} essentially reduces the challenge of proving upper bounds on the Boolean rank of $A \otimes B$ to finding two families $\mathcal{M}$ and $\mathcal{N}$ of low-rank matrices satisfying the conditions of the theorem.
It turns out that a useful way to obtain these families is to construct, for each of the matrices $A$ and $B$, a collection of low-rank matrices
such that every $1$-entry is covered by many of them (see Corollary~\ref{cor:RankAxB_k}).
We note that Theorem~\ref{theorem:RankTensorIntro} is inspired by the approach taken by Watts~\cite{watts2001boolean} in her upper bound on the Boolean rank of $C_4 \otimes C_4$, where she essentially considers a special case of the theorem with families of matrices of Boolean rank  $2$.

It is worth mentioning that the bound given in Theorem~\ref{theorem:RankTensorIntro} on the Boolean rank of $A \otimes B$ is tight.
Namely, if $\RB(A \otimes B)=s$ then there exist families $\mathcal{M}$ and $\mathcal{N}$ as in the theorem, each with $s$ matrices of Boolean rank $1$, attaining an upper bound of $s$ on the Boolean rank of $A \otimes B$ (see Lemma~\ref{lemma:lower}). We show that the existence of these families implies a general {\em lower} bound on the Boolean rank of Kronecker products of $0,1$-matrices.
See Theorem~\ref{thm:lower} for the precise statement. As an application, we show that the Boolean rank of $C_n \otimes C_n$ is at least $4-o(1)$ times the Boolean rank of $C_n$, slightly improving on the multiplicative constant $3$ derived above using the isolation sets of $C_n$ (see Corollary~\ref{cor:lower}).

We apply Theorem~\ref{theorem:RankTensorIntro} to study the Boolean rank of the matrix $C_n \otimes C_n$ and to provide economical covers of $C_n \otimes C_n$ by monochromatic combinatorial rectangles.
Our approach allows us to obtain non-trivial upper bounds on the Boolean rank of $C_n \otimes C_n$ already for small values of $n$.
This is used to show that the Boolean rank of $C_n \otimes C_n$ is strictly smaller than the square of the Boolean rank of $C_n$ for all integers $n \geq 7$. More generally, we prove the following.
\BT\label{thm:n>=7Intro}
For all integers $n,m \geq 7$,~ $\RB(C_n \otimes C_m) < \RB(C_n) \cdot \RB(C_m)$.
\ET
Combined with the results of Watts~\cite{watts2001boolean}, Theorem~\ref{thm:n>=7Intro} settles the question of whether a strict inequality holds in~\eqref{eq:sub_mul} for $A=B=C_n$, for all integers $n$ besides $5$ and $6$ (see Section~\ref{subsub:examples}).

We further demonstrate that our method can be used to obtain asymptotically economical covers.
We provide a cover of the matrix $C_n \otimes C_n$ with $O(k \log_2 k)$ rectangles for $k = \RB(C_n)$, matching the bound of~\cite{karchmer1995fractional} up to a $\log_2 \log_2 n$ multiplicative term (see Theorem~\ref{thm:mlogm}).
The construction involves combinatorial and algebraic arguments, and in contrast to the probabilistic construction of~\cite{karchmer1995fractional}, is {\em explicit}, that is, can be computed by an efficient deterministic algorithm.
From the communication complexity point of view, the explicitness of the cover means that the players can separately and efficiently compute the cover needed for their non-deterministic protocol, and in particular, they do not have to share any randomness or other information in advance.
We remark that we were recently informed in a personal communication with Kushilevitz~\cite{KushilevitzPersonal} that it is also possible to explicitly construct a cover of $C_n \otimes C_n$ with $O(k)$ rectangles for $k = \RB(C_n)$ (where again the result provides a gap between $\RB(C_n \otimes C_n)$ and $\RB(C_n)^2$ for any sufficiently large $n$).

Our method for proving upper bounds on the Boolean rank of Kronecker products, as given in Theorem~\ref{theorem:RankTensorIntro}, might find additional applications.
Indeed, it can be applied to additional families of matrices other than $C_n$ (see Section~\ref{subsub:examples} for an example).
Moreover, it is not limited specifically to the Boolean rank and can be useful for other rank functions in other contexts as well.
To demonstrate its potential applicability, we consider the notion of {\em spanoids}, recently introduced by Dvir, Gopi, Gu, and Wigderson~\cite{DvirGGW20}.
A spanoid is a simple logical inference structure that captures various mathematical objects with applications in areas of research like combinatorics, algebra, statistical physics, network theory, and coding theory.
A central parameter of a spanoid is its rank and a basic operation on spanoids is a product.
It turns out that the Boolean rank of $0,1$-matrices can be represented as the rank of a special family of spanoids.
We extend Theorem~\ref{theorem:RankTensorIntro} to the wider framework of spanoids, providing a general technique for proving upper bounds on the rank of products of spanoids.

\subsection{Overview of Proofs}
\label{sec-outline}
Our explicit bounds on the Boolean rank of the matrices $C_n \otimes C_n$ rely on the general method presented in Theorem~\ref{theorem:RankTensorIntro}.
To get some intuition for the method, suppose that for some $0,1$-matrix $A$ there exist three matrices $A_1,A_2,A_3$ of Boolean rank $a_1,a_2,a_3$ respectively, such that every two of them form a cover of $A$. In such a case, the matrix $A$ is said to be {\em $(a_1,a_2,a_3)$-coverable}. Similarly, suppose that a $0,1$-matrix $B$ is $(b_1,b_2,b_3)$-coverable, and let $B_1,B_2,B_3$ be the corresponding matrices. It can be shown that these matrices satisfy, under Boolean arithmetic, that
\begin{eqnarray*}
A \otimes B = A_1 \otimes B_1 + A_2 \otimes B_2 + A_3 \otimes B_3,
\end{eqnarray*}
as implied by comparing every block of the matrix $A \otimes B$ to its corresponding block in the right hand side (see the proof of Theorem~\ref{theorem:RankTensorIntro}).
It thus follows, using the sub-additivity and sub-multiplicativity of the Boolean rank, that
\begin{eqnarray}\label{eq:overviewRank}
\RB(A \otimes B) \leq a_1 \cdot b_1 + a_2 \cdot b_2 + a_3 \cdot b_3.
\end{eqnarray}
This reduces proving upper bounds on the Boolean rank of the product $A \otimes B$ to finding triples of low-rank matrices as above for $A$ and $B$.

To demonstrate the above approach, let us consider the case where both $A$ and $B$ are equal to the matrix $C_n$.
Notice that to obtain a non-trivial upper bound on the Boolean rank of $C_n \otimes C_n$, namely, a bound strictly smaller than $\RB(C_n)^2$, it suffices to show that $C_n$ is $(1,k-1,k-1)$-coverable for $k = \RB(C_n)$. Indeed, by symmetry this implies that $C_n$ is also $(k-1,1,k-1)$-coverable, so applying~\eqref{eq:overviewRank} we obtain that
\[\RB(C_n \otimes C_n) \leq 1 \cdot (k-1) + (k-1) \cdot 1 + (k-1) \cdot (k-1) = k^2-1 < \RB(C_n)^2.\]

We next describe how to explicitly find for $C_n$ the three required matrices $A_1,A_2,A_3$.
Suppose for simplicity that $n$ has the form $n = \binom{k}{ k/2}$ for an even integer $k$, and recall that $k = \RB(C_n)$.
It will be convenient to identify the rows of $C_n$ with all the $k/2$-subsets of $[k]$ and to identify its columns with their complements.
Namely, if the $i$'th row of $C_n$ corresponds to a $k/2$-subset $F_i$ of $[k]$ then the $i$'th column of $C_n$ corresponds to $\overline{F_i} = [k] \setminus F_i$.
This provides a natural cover of $A$ by $k$ matrices of Boolean rank $1$, where the $j$'th matrix has ones in the entries whose rows and columns correspond to sets that include $j$ ($j \in [k]$).
Now, defining $A_1 $ to be the first such matrix and $A_2$ to be the sum of the remaining $k-1$ matrices,
we get two matrices of Boolean rank $1$ and $k-1$ that together cover $C_n$.
It remains to define another matrix $A_3$ of Boolean rank $k-1$, such that both the pairs $A_1,A_3$ and $A_2,A_3$ cover $C_n$ as well.

In order to define the third matrix $A_3$, we need the simple observation that the order of the sets assigned above to the rows of $C_n$ can be arbitrary.
In fact, applying any permutation $h$ to the $k/2$-subsets of $[k]$ results in another cover of $C_n$ by $k$ matrices,
for which one can consider the matrix $A_3$ of Boolean rank $k-1$, which again includes all matrices but the first one.
However, the permutation $h$ should be chosen in a way that ensures that the pairs $A_1,A_3$ and $A_2,A_3$ cover $C_n$.

On the one hand, for $A_1,A_3$ to cover $C_n$, it suffices to require $h$ to {\em preserve} the element $1$ in the sets, that is, to satisfy $1 \in F \Leftrightarrow 1 \in h(F)$ for every set $F$,
because this implies that the two covers share the same first matrix.
On the other hand, for $A_2,A_3$ to cover $C_n$, we need $h$ to satisfy that whenever an entry of $C_n$ is not covered by $A_2$, that is, its row and column sets $F_i,\overline{F_{i'}}$ satisfy
$F_i \cap \overline{F_{i'}} = \{1\}$, the intersection $h(F_i) \cap \overline{h(F_{i'})}$ includes some element different from $1$. Intuitively, $h$ is required to {\em shift} such an intersection from $\{1\}$ to some other non-empty set. It can be shown that a permutation satisfying these two conditions indeed exists, allowing us to obtain the required triple $A_1,A_2,A_3$ (see Section~\ref{sec:strictly} for a full and extended argument).

The above idea suffices to prove that the Boolean rank of $C_n \otimes C_n$ is strictly smaller than the square of the Boolean rank of $C_n$ for all integers $n \geq 7$.
However, this argument is inherently limited in the upper bound that it can yield for the Boolean rank of $C_n \otimes C_n$. The reason is that if $C_n$ is $(a_1,a_2,a_3)$-coverable then for every distinct $i,j \in [3]$ we must have $a_i + a_j \geq k$, which implies that the bound that follows from~\eqref{eq:overviewRank} cannot be lower than $3k^2/4$.
To get below this bound, we need a generalized version of the approach presented here.
Namely, to prove an upper bound on the Boolean rank of $A \otimes B$, we need a collection of $s$ matrices for each of $A$ and $B$,
such that every $\lceil s/2 \rceil$ of them cover $A$ and $B$ respectively.
Letting the number of matrices $s$ be much larger than $3$ allows us to obtain more economical constructions of covers of $C_n \otimes C_n$ (see Corollary~\ref{cor:RankAxB_k}).

To construct an asymptotically economical cover of $C_n \otimes C_n$, we apply the above approach and provide $s$ matrices such that every $\lceil s/2 \rceil$ of them cover $C_n$, where $s = \Theta(k/\log k)$. The basic strategy is similar to the one presented above. We again view the matrix $C_n$ as the matrix that represents the intersections of all $k/2$-subsets of $[k]$.
However, while in the above construction we used two different ways to assign sets to the rows of $C_n$ (an arbitrary one and another one obtained by the permutation $h$), here we need a much larger number of permutations, each of which induces a cover of size $k$ of $C_n$ used to define a matrix of Boolean rank roughly $2k/s$. Note that this is the smallest possible rank of the matrices in such a collection, given that every $\lceil s/2 \rceil$ of them cover the matrix $C_n$ (whose Boolean rank is $k$).

The construction of these permutations is based on a family of functions with the property that only few of them can share a common collision (i.e., two inputs that are mapped to the same output). This family of functions is explicitly constructed using an algebraic argument over finite fields (see Lemma~\ref{lem:g's_prime_new}). This gives us the family of matrices on which we apply Corollary~\ref{cor:RankAxB_k} to derive the guaranteed explicit cover (see Section~\ref{sec:asymptotic}).

\subsection{Outline}
The rest of the paper is organized as follows.
In Section~\ref{sec:preliminaries}, we gather several definitions and claims needed throughout the paper.
In Section~\ref{sec:Bounds}, we present our method for proving upper bounds on the Boolean rank of Kronecker products of $0,1$-matrices and prove Theorem~\ref{theorem:RankTensorIntro}.
We also show there that the upper bound provided by the theorem is tight and use it to derive a lower bound on the Boolean rank of $C_n \otimes C_n$.
In Section~\ref{sec:Covers_Cn}, we use Theorem~\ref{theorem:RankTensorIntro} to obtain our explicit constructions of economical covers of the matrices $C_n \otimes C_n$.
We first show that for all integers $n \geq 7$, there exists a cover with strictly less than $\RB(C_n)^2$ rectangles, and more generally, prove Theorem~\ref{thm:n>=7Intro}.
Then, we provide an explicit construction of a cover of $C_n \otimes C_n$ for the asymptotic case.
Finally, in Section~\ref{sec:spanoids}, we provide a brief introduction to the topic of spanoids introduced in~\cite{DvirGGW20} and generalize Theorem~\ref{theorem:RankTensorIntro} to this setting.

\section{Preliminaries}\label{sec:preliminaries}

We start with a few definitions and notation that will be needed throughout the paper.
Entry $i,j$ of a matrix $M$ will be denoted by $M_{i,j}$, and if a matrix $M_t$ has a subscript then its $i,j$'th entry will be denoted by $(M_t)_{i,j}$.
For two $0,1$-matrices $M$ and $A$, we write $M \preceq A$ if $M_{i,j} \leq A_{i,j}$ for all $i,j$.

Let $A$ be a $0,1$-matrix of size $n \times m$.
A {\em combinatorial rectangle} of $A$ is a subset of $[n] \times [m]$ of the form $X \times Y$, where $X \subseteq [n]$ and $Y \subseteq [m]$.
The rectangle is {\em monochromatic} if all the values $A_{i,j}$ with $i \in X$ and $j \in Y$ are equal.
As defined earlier, the {\em Boolean rank} of $A$, denoted $\RB(A)$, is the smallest number of monochromatic combinatorial rectangles needed to cover the $1$-entries of $A$. Note that every monochromatic combinatorial rectangle of ones can be viewed as a matrix of Boolean rank $1$.

A collection $\mathcal{M} = \{M_1,\ldots,M_s\}$ of $0,1$-matrices is said to form a {\em cover} of $A$ (or to {\em cover} $A$) if for every $i,j$ it holds that $A_{i,j}=1$ if and only if there exists some $t \in [s]$ for which $(M_t)_{i,j}=1$.
Equivalently, the matrices of $\mathcal{M}$ should satisfy, under Boolean arithmetic, that $A = \sum_{t \in [s]}{M_t}$. Note that $M_t \preceq A$ for all $t \in [s]$.

The Boolean rank of $A$ can also be defined as the smallest integer $k$ for which one can assign subsets of $[k]$ to its rows and columns,
where if $X_i$ and $Y_j$ are the subsets assigned to row $i$ and column $j$ respectively, then $X_i \cap Y_j \neq \emptyset$ if and only if $A_{i,j} = 1$.
Notice that for every $t \in [k]$, the set of all entries $A_{i,j}$ for which $t \in X_i \cap Y_j$ defines a monochromatic combinatorial rectangle of ones.
Since these $k$ rectangles cover all the $1$-entries of $A$, we get a cover of $A$ of size $k$.

For an illustration of these concepts, consider the following example.
\begin{equation*}
\kbordermatrix{   &   \{2\} &   \{2\} &   \{1\} &  \{1\} \\
                             \{1\}     & 0 & 0 & 1 & 1   \\
                             \{1\}     & 0 & 0 & 1 & 1   \\
                             \{2\}     & 1 & 1 & 0 & 0   \\
                             \{2\}     & 1 & 1 & 0 & 0  }
                             ~~~+~~~
                             \kbordermatrix{   &   \{2\} &   \{1\} &   \{1\} &  \{2\} \\
                             \{1\}     & 0 & 1 & 1 & 0   \\
                             \{2\}     & 1 & 0 & 0 & 1   \\
                             \{2\}     & 1 & 0 & 0 & 1   \\
                             \{1\}     & 0 & 1 & 1 & 0  }
                             ~~~=
                             \kbordermatrix{   &   ~ &  ~ &   ~&  ~ \\
                             ~     & 0 & 1 & 1 & 1   \\
                             ~     & 1 & 0 & 1 & 1   \\
                             ~     & 1 & 1 & 0 & 1   \\
                             ~     & 1 & 1 & 1 & 0  }.
\end{equation*}
Here, the two matrices on the left form a cover of the matrix on the right. Above and to the left of these two matrices are written the subsets defining them.
Each of the two matrices on the left has Boolean rank $2$, and the matrix on the right has Boolean rank $4$.

For an integer $n$, let $C_n$ be the $n \times n$ matrix that has zeros on the main diagonal and ones everywhere else.
The following lemma, given in~\cite{Caen2}, determines its Boolean rank. Recall that $\sigma(n)$ is the smallest integer $k$ satisfying $n \leq \binom{k}{\lceil k/2 \rceil}$.
\BL\label{lem:R(C_n)}(\cite{Caen2})
The Boolean rank of $C_n$ is $\sigma(n)$.
\EL

The {\em Kronecker product} $A \otimes B$ of $0,1$-matrices $A$ and $B$ is the block matrix whose $i,j$'th block is formed by multiplying the $i,j$'th entry $A_{i,j}$ of $A$ with the entire matrix $B$.
We need the following simple claim.
\begin{claim}\label{claim:rectangles}
Let $A$ and $B$ be $0,1$-matrices, and let $M$ be a $0,1$-matrix of Boolean rank $1$ such that $M \preceq A \otimes B$.
Then, there exist $0,1$-matrices $M_A$ and $M_B$ of Boolean rank $1$, satisfying $M_A \preceq A$, $M_B \preceq B$, and $M \preceq M_A \otimes M_B$.
\end{claim}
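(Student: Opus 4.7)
The plan is to unwind the three definitions in sight: ``Boolean rank $1$'' means ``the characteristic matrix of a combinatorial rectangle of ones'', ``Kronecker product'' means that rows and columns of $A \otimes B$ are indexed by pairs, and $M \preceq A \otimes B$ means entrywise domination. Concretely, since $\RB(M) = 1$, I would write $M$ as the characteristic matrix of some rectangle $X \times Y$, where $X$ is a set of rows of $A \otimes B$ and $Y$ is a set of its columns. Using the pair indexing, each element of $X$ has the form $(i_A, i_B)$ (a row of $A$ paired with a row of $B$), and each element of $Y$ has the form $(j_A, j_B)$. I project $X$ onto its two coordinates to obtain sets $X_A$ and $X_B$, and analogously project $Y$ to obtain $Y_A$ and $Y_B$. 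Then I define $M_A$ and $M_B$ to be the $0,1$-matrices whose $1$-entries are exactly $X_A \times Y_A$ and $X_B \times Y_B$; both have Boolean rank at most $1$ by construction.

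The substantive verification is $M_A \preceq A$ and $M_B \preceq B$. For any $i_A \in X_A$ and $j_A \in Y_A$, I would pick witnesses $i_B, j_B$ such that $(i_A, i_B) \in X$ and $(j_A, j_B) \in Y$; then $M_{(i_A,i_B),(j_A,j_B)} = 1$ since this entry lies in the rectangle $X \times Y$, and the hypothesis $M \preceq A \otimes B$ forces $(A \otimes B)_{(i_A,i_B),(j_A,j_B)} = A_{i_A,j_A} \cdot B_{i_B,j_B} = 1$, which yields $A_{i_A,j_A} = 1$ (and symmetrically $B_{i_B,j_B} = 1$, giving $M_B \preceq B$). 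Finally, $M \preceq M_A \otimes M_B$ is immediate from the definitions: a $1$-entry of $M$ is indexed by some $(i_A,i_B) \in X$ and $(j_A,j_B) \in Y$, so $i_A \in X_A$, $j_A \in Y_A$, $i_B \in X_B$, $j_B \in Y_B$, whence $(M_A)_{i_A,j_A} = (M_B)_{i_B,j_B} = 1$ and the corresponding entry of $M_A \otimes M_B$ is $1$.

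There is no real obstacle; the claim is a clean bookkeeping exercise once one identifies rank-$1$ matrices with rectangles and uses the product structure of the index sets. The one subtlety worth flagging in the write-up is that $X$ need not equal $X_A \times X_B$ (and likewise for $Y$), so the conclusion is genuinely the inequality $M \preceq M_A \otimes M_B$ rather than equality; this is why the statement is phrased with $\preceq$, and it is precisely what the argument delivers.
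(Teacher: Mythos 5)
Your proof is correct and essentially the same as the paper's: the paper defines $(M_A)_{i_A,j_A}=1$ iff some $i_B,j_B$ make $M_{i_A,j_A,i_B,j_B}=1$, which—because $M$ is a rectangle $X\times Y$—is exactly your $X_A\times Y_A$, and the remaining checks coincide. The only difference is stylistic: by starting from the rectangle description you get $\RB(M_A)=\RB(M_B)=1$ for free, whereas the paper defines $M_A,M_B$ abstractly and then separately verifies the rank-$1$ (rectangle-closure) property from $\RB(M)=1$.
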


\BPF
By definition, every entry of $A \otimes B$ can be indexed by a $4$-tuple $(i_A,j_A,i_B,j_B)$, so that
\begin{eqnarray}\label{eq:AxB_claim}
(A \otimes B)_{i_A,j_A,i_B,j_B} = A_{i_A,j_A} \cdot B_{i_B,j_B}.
\end{eqnarray}
Suppose that $M \preceq A \otimes B$ and that $M$ has Boolean rank $1$.
Define the $0,1$-matrix $M_A$ such that $(M_A)_{i_A,j_A} = 1$ if and only if there exist $i_B,j_B$ with $M_{i_A,j_A,i_B,j_B} = 1$.
Similarly, define the $0,1$-matrix $M_B$ such that $(M_B)_{i_B,j_B} = 1$ if and only if there exist $i_A,j_A$ with $M_{i_A,j_A,i_B,j_B} = 1$.
Since $M \preceq A \otimes B$ and using~\eqref{eq:AxB_claim}, it follows that $M_A \preceq A$ and $M_B \preceq B$.
To prove that $M_A$ has Boolean rank $1$, it suffices to show that $M_A$ is nonzero and that if $(M_A)_{i_A,j_A} = 1$ and $(M_A)_{i'_A,j'_A} = 1$ then $(M_A)_{i_A,j'_A} = 1$ and $(M_A)_{i'_A,j_A} = 1$.

Since $M$ is nonzero, it is clear that $M_A$ is nonzero as well.
Suppose now that $(M_A)_{i_A,j_A} = 1$ and $(M_A)_{i'_A,j'_A} = 1$.
By the definition of $M_A$, this implies that there exist $i_B,j_B,i'_B,j'_B$ such that $M_{i_A,j_A,i_B,j_B} = 1$ and $M_{i'_A,j'_A,i'_B,j'_B} = 1$.
Since $M$ has Boolean rank $1$, it follows that $M_{i_A,j'_A,i_B,j'_B} = 1$ and $M_{i'_A,j_A,i'_B,j_B} = 1$.
By the definition of $M_A$, we obtain that $(M_A)_{i_A,j'_A} = 1$ and $(M_A)_{i'_A,j_A} = 1$, as required. By symmetry, it also follows that $M_B$ has Boolean rank $1$.

We finally claim that $M \preceq M_A \otimes M_B$.
Indeed, suppose that $M_{i_A,j_A,i_B,j_B} = 1$. By the definition of $M_A$ and $M_B$, it follows that $(M_A)_{i_A,j_A} = 1$ and $(M_B)_{i_B,j_B} = 1$, and therefore $(M_A \otimes M_B)_{i_A,j_A,i_A,j_B} = 1$, so we are done.
\EPF \\

We end this section with the following well-known fact.
\begin{fact}\label{fact:central_binom}
For every even integer $r$, $\binom{r}{r/2} \geq \frac{2^r}{\sqrt{2r}}$.
\end{fact}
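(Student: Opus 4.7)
The plan is to reduce the inequality to a Wallis-type product bound and then prove the latter by a short telescoping argument. Writing $r = 2n$ for convenience, the starting point is the identity
\[\binom{2n}{n} = 2^{2n} \cdot \prod_{k=1}^{n} \frac{2k-1}{2k},\]
which one obtains by splitting $(2n)!$ into the product of its even factors (equal to $2^n n!$) and the product of its odd factors, and then dividing by $(n!)^2$. Denoting the displayed product by $P_n$, the target inequality becomes $P_n \geq 1/(2\sqrt{n})$, or equivalently, after squaring and inverting, $\prod_{k=1}^{n} (2k)^2/(2k-1)^2 \leq 4n$.

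For this last bound I would use the factor-wise comparison
\[\frac{(2k)^2}{(2k-1)^2} \leq \frac{k}{k-1}, \qquad k \geq 2,\]
which cross-multiplies to $4k^2(k-1) \leq k(4k^2 - 4k + 1)$, i.e., $0 \leq k$. Peeling off the $k=1$ factor, which equals $4$ and is the unique place where the above comparison breaks down, and multiplying the remaining inequalities over $k = 2,\ldots,n$, the right-hand side telescopes to $n$, giving
\[\prod_{k=1}^{n} \frac{(2k)^2}{(2k-1)^2} \leq 4 n,\]
as required.

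The only subtle point is spotting the clean comparison $\frac{(2k)^2}{(2k-1)^2} \leq \frac{k}{k-1}$ that makes the product telescope, together with the observation that one has to handle the $k=1$ term separately. A fully viable alternative would be induction on even $r$ via the two-step recursion $\binom{r+2}{(r+2)/2} = \binom{r}{r/2} \cdot \frac{4(r+1)}{r+2}$, with base case $r=2$ where equality holds; the inductive step reduces, after squaring, to $(r+1)^2 \geq r(r+2)$, which is the trivial $1 \geq 0$.
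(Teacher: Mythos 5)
The paper states this as a well-known fact and gives no proof, so there is no paper argument to compare against; what matters is whether your argument is correct, and it is. The identity $\binom{2n}{n} = 2^{2n}\prod_{k=1}^{n}\frac{2k-1}{2k}$ is right (even factors of $(2n)!$ contribute $2^n n!$, odd factors contribute the product of odd numbers, and dividing by $(n!)^2$ gives the claim), and the factor-wise comparison $\frac{(2k)^2}{(2k-1)^2}\leq\frac{k}{k-1}$ for $k\geq 2$ does cross-multiply to the trivially true $0\leq k$, so the telescoping gives $\prod_{k=1}^n\frac{(2k)^2}{(2k-1)^2}\leq 4\cdot\frac{n}{1}=4n$, which after taking reciprocals and square roots is exactly the target. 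The induction you sketch is also correct: the recursion $\binom{r+2}{(r+2)/2}=\binom{r}{r/2}\cdot\frac{4(r+1)}{r+2}$ holds, the base $r=2$ gives equality, and the inductive step reduces to $(r+1)^2\geq r(r+2)$. One small remark: as stated for ``every even integer $r$'' the right-hand side is undefined at $r=0$, so you are implicitly taking $r\geq 2$ (equivalently $n\geq 1$); that is the regime the paper uses, and your empty-product convention at $n=1$ handles the base case correctly. Either of your two arguments would serve as a complete proof.
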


\section{The Boolean Rank of Kronecker Products}\label{sec:Bounds}

\subsection{Upper Bound}

We restate and prove the following theorem, which presents our general method for proving upper bounds on the Boolean rank of Kronecker products of matrices.\\

\noindent
{\bf Theorem 1}
{\em
Let $A$ and $B$ be two $0,1$-matrices.
Suppose that there exist two families of $0,1$-matrices, $\mathcal{M} = \{M_{1},...,M_{s}\}$ and $\mathcal{N} = \{N_{1},...,N_{s}\}$, such that
\begin{enumerate}
  \item $\mathcal{M}$ is a cover of $A$, and
  \item for every $i,j$ such that $A_{i,j}=1$, the matrices $N_t$ for which ${(M_t)}_{i,j} = 1$ form a cover of $B$.
\end{enumerate}
Then,
\[\RB(A \otimes B) \leq \sum_{t=1}^{s} \RB(M_{t}) \cdot \RB(N_{t}).\]
}

\BPF
We first show that
\begin{equation}\label{equal2}
A \otimes B =  \sum_{t = 1}^s (M_{t} \otimes N_{t}),
\end{equation}
where the sum is under Boolean arithmetic.
Recall that the matrix $A \otimes B$ can be viewed as a matrix of blocks, where the block of $A \otimes B$ indexed by $i,j$ is $A_{i,j} \cdot B$.
To prove~\eqref{equal2}, it suffices to compare the matrices block by block. So consider an arbitrary block indexed by $i,j$.

Suppose first that $A_{i,j}=0$. In this case, the $i,j$'th block of $A \otimes B$ is the zero matrix.
Since $\mathcal{M}$ is a cover of $A$, for every $t \in [s]$ we have $(M_t)_{i,j}=0$.
Hence, the $i,j$'th block of $M_t \otimes N_t$ is also the zero matrix for all $t \in [s]$. It follows that the corresponding block of their sum is zero as well.

Suppose now that $A_{i,j}=1$. Here, the $i,j$'th block of $A \otimes B$ is equal to $B$.
On the other hand, the $i,j$'th block of the matrix $\sum_{t = 1}^s (M_{t} \otimes N_{t})$
is precisely the sum of the matrices $N_t$ for which $(M_t)_{i,j}=1$. By assumption, these matrices form a cover of $B$. Hence, this block is equal to $B$, as required.

Finally, combining~\eqref{equal2} with the sub-additivity and sub-multiplicativity of the Boolean rank, we obtain that
\[\RB(A \otimes B) = \RB \bigg ( \sum_{t = 1}^s (M_{t} \otimes N_{t}) \bigg ) \leq \sum_{t=1}^{s}{\RB(M_{t} \otimes N_{t})}  \leq \sum_{t=1}^{s} \RB(M_{t}) \cdot \RB(N_{t}),\]
completing the proof.
\EPF\\

\begin{remark}\label{remark:explicitThm1}
We note that the proof of Theorem~\ref{theorem:RankTensorIntro}, in addition to providing an upper bound on the Boolean rank of $A \otimes B$, shows that the matrices of the given families $\mathcal{M} = \{M_1,\ldots,M_s\}$ and $\mathcal{N}=\{N_1,\ldots,N_s\}$ can in certain cases be used to explicitly construct a cover of $A \otimes B$ by monochromatic combinatorial rectangles of ones. To see this, suppose that every matrix $M_t$ is given as a sum of matrices $M_t(i)$ of Boolean rank $1$ for $i=1,\ldots,\RB(M_i)$, and that every matrix $N_t$ is given as a sum of matrices $N_t(j)$ of Boolean rank $1$ for $j=1,\ldots,\RB(N_i)$.
It can be seen that for every $t \in [s]$ the matrices $M_t(i) \otimes N_t(j)$ form a cover of $M_t \otimes N_t$.
It thus follows from~\eqref{equal2} that they all form a cover of $A \otimes B$ with $\sum_{t=1}^{s}{\RB(M_t) \cdot \RB(N_t)}$ matrices of Boolean rank $1$.
\end{remark}

In order to prove an upper bound on the Boolean rank of $A \otimes B$ using Theorem~\ref{theorem:RankTensorIntro},
one has to find two families $\mathcal{M}$ and $\mathcal{N}$ of matrices which satisfy the conditions of the theorem.
The following corollary suggests a simple property of such families that suffices for this purpose.

\begin{coro}\label{cor:RankAxB_k}
Let $A$ and $B$ be two $0,1$-matrices.
Suppose that there exist two families of $0,1$-matrices, $\mathcal{M} = \{M_{1},...,M_{s}\}$ and $\mathcal{N} = \{N_{1},...,N_{s}\}$,
such that every $\lceil s/2 \rceil$ matrices of $\mathcal{M}$ cover $A$ and every $\lceil s/2 \rceil$ matrices of $\mathcal{N}$ cover $B$.
Then,
\[\RB(A \otimes B) \leq \sum_{t=1}^{s} \RB(M_{t}) \cdot \RB(N_{t}).\]
In particular,
\[\RB(A \otimes A) \leq \sum_{t=1}^{s} \RB(M_{t})^2.\]
\end{coro}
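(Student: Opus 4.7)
The plan is to reduce Corollary~\ref{cor:RankAxB_k} to a direct application of Theorem~\ref{theorem:RankTensorIntro}, using the hypothesis on $\lceil s/2 \rceil$-subsets to verify the two conditions required by the theorem. No new combinatorial input is needed beyond a simple pigeonhole observation.

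First I would verify Condition~1 of Theorem~\ref{theorem:RankTensorIntro}, namely that $\mathcal{M}$ itself covers $A$. This is immediate: picking any particular $\lceil s/2 \rceil$-subset of $\mathcal{M}$ yields a cover of $A$ by assumption, and (as noted in Section~\ref{sec:preliminaries}) being a cover of $A$ implies that each matrix in that subset is $\preceq A$. Varying the subset, every $M_t$ satisfies $M_t \preceq A$, so adjoining the remaining matrices to any covering subset still produces a cover. Hence the full family $\mathcal{M}$ covers $A$, and symmetrically $\mathcal{N}$ covers $B$.

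The main step is Condition~2. Fix $i,j$ with $A_{i,j}=1$ and set $T_{i,j} = \{t \in [s] : (M_t)_{i,j}=1\}$. The key claim is that $|T_{i,j}| \geq \lceil s/2 \rceil$. Indeed, if this failed then $|[s] \setminus T_{i,j}| > s - \lceil s/2 \rceil = \lfloor s/2 \rfloor$, so $[s] \setminus T_{i,j}$ would contain a subset $S$ of size exactly $\lceil s/2 \rceil$. The matrices $\{M_t : t \in S\}$ are supposed to cover $A$, but by definition of $S$ we have $(M_t)_{i,j} = 0$ for every $t \in S$, contradicting $A_{i,j}=1$. Hence $|T_{i,j}| \geq \lceil s/2 \rceil$. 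By the hypothesis on $\mathcal{N}$, any $\lceil s/2 \rceil$-subset of $\{N_t : t \in T_{i,j}\}$ already covers $B$, and (again using $N_t \preceq B$ for all $t$) enlarging that subset to all of $\{N_t : t \in T_{i,j}\}$ preserves the cover. This gives Condition~2.

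With both conditions checked, Theorem~\ref{theorem:RankTensorIntro} immediately yields $\RB(A \otimes B) \leq \sum_{t=1}^{s} \RB(M_{t}) \cdot \RB(N_{t})$. For the ``in particular'' statement, I would simply take $\mathcal{N} = \mathcal{M}$ when $A=B$; the hypothesis on $\mathcal{M}$ then gives the assumption for both families, and the general bound specializes to $\sum_{t=1}^{s} \RB(M_t)^2$. The argument has no real obstacle: the only non-trivial ingredient is the pigeonhole-style observation that the complement of a small $T_{i,j}$ would contain a $\lceil s/2 \rceil$-subset violating the covering hypothesis for $\mathcal{M}$.
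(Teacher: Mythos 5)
Your proof is correct and follows essentially the same route as the paper's: reduce to Theorem~\ref{theorem:RankTensorIntro} and verify Condition~2 via the pigeonhole observation that at least $\lceil s/2\rceil$ of the $M_t$ must have a $1$ in each entry where $A$ does, since otherwise the complementary matrices would include a $\lceil s/2\rceil$-subset failing to cover $A$. Your explicit remarks that $M_t \preceq A$ and $N_t \preceq B$ justify enlarging a covering subfamily are slightly more careful than the paper's terse ``Clearly,'' but the argument is the same.
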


\BPF
Suppose that every $\lceil s/2 \rceil$ matrices of $\mathcal{M}$ cover $A$ and that every $\lceil s/2 \rceil$ matrices of $\mathcal{N}$ cover $B$.
We argue that these families of matrices satisfy the conditions of Theorem~\ref{theorem:RankTensorIntro}.

Clearly, $\mathcal{M}$ is a cover of $A$.
We next observe that for every $i,j$ such that $A_{i,j}=1$, the number of matrices $M_t$ satisfying $(M_t)_{i,j}=1$ is at least $\lceil s / 2 \rceil$.
Indeed, if their number was smaller than $\lceil s / 2 \rceil$ then all the other matrices, whose number is larger than $s- \lceil s / 2 \rceil = \lfloor s / 2 \rfloor$, would have zeros in their $i,j$'th entry, in contradiction to the assumption that they form a cover of $A$.
It thus follows that for every $i,j$ such that $A_{i,j}=1$, the number of matrices $N_t$ for which $(M_t)_{i,j}=1$ is at least $\lceil s / 2 \rceil$, so by assumption they cover the matrix $B$.
This allows us to apply Theorem~\ref{theorem:RankTensorIntro} and to complete the proof.
\EPF\\

Let us consider now the case where the number $s$ of matrices in the families $\mathcal{M}$ and $\mathcal{N}$ is $3$, as given in the following definition.
\BD
A $0,1$-matrix $A$ is said to be {\em $(a_1,a_2,a_3)$-coverable} if there exist three matrices $A_1$, $A_2$, and $A_3$ satisfying $\RB(A_i) \leq a_i$ for all $i \in [3]$, such that every two of them cover $A$.
\ED
By Corollary~\ref{cor:RankAxB_k}, we immediately get the following statement, that will be useful to us later.
\begin{coro}\label{cor:RankAxB_3}
Let $A$ and $B$ be two $0,1$-matrices.
Suppose that $A$ is $(a_1,a_2,a_3)$-coverable and that $B$ is $(b_1,b_2,b_3)$-coverable.
Then, $\RB(A \otimes B) \leq a_1 \cdot b_1 + a_2 \cdot b_2 + a_3 \cdot b_3$.
\end{coro}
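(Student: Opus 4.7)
The plan is to observe that this corollary is essentially a direct specialization of Corollary~\ref{cor:RankAxB_k} to the case $s=3$, so no new combinatorial work is required; the only thing to check is that the hypotheses line up.

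Concretely, I would let $A_1,A_2,A_3$ be the three matrices witnessing that $A$ is $(a_1,a_2,a_3)$-coverable, with $\RB(A_i) \le a_i$, and analogously let $B_1,B_2,B_3$ be the matrices witnessing that $B$ is $(b_1,b_2,b_3)$-coverable, with $\RB(B_i) \le b_i$. Set $\mathcal{M} = \{A_1,A_2,A_3\}$ and $\mathcal{N} = \{B_1,B_2,B_3\}$, paired by index. Since $\lceil 3/2 \rceil = 2$, the defining property of $(a_1,a_2,a_3)$-coverability, namely that every two of $A_1,A_2,A_3$ cover $A$, is exactly the statement that every $\lceil s/2 \rceil$ matrices of $\mathcal{M}$ cover $A$; likewise for $\mathcal{N}$ and $B$.

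Hence the hypotheses of Corollary~\ref{cor:RankAxB_k} are satisfied, and applying it gives
\[
\RB(A \otimes B) \;\le\; \sum_{t=1}^{3} \RB(A_t)\cdot \RB(B_t) \;\le\; a_1 b_1 + a_2 b_2 + a_3 b_3,
\]
as desired. There is no genuine obstacle here: the only thing one might double-check is the pairing convention, i.e.\ that the indices in $\mathcal{M}$ and $\mathcal{N}$ are matched so that $M_t = A_t$ corresponds to $N_t = B_t$, which is simply a labelling choice and carries no content beyond what Corollary~\ref{cor:RankAxB_k} already provides.
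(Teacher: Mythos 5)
Your proof is correct and follows the same route the paper takes: the paper also derives this corollary as an immediate specialization of Corollary~\ref{cor:RankAxB_k} with $s=3$, using $\lceil 3/2\rceil=2$ to translate the coverability hypothesis. You have simply spelled out the matching of hypotheses, which the paper leaves implicit.
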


We finally remark that when Theorem~\ref{theorem:RankTensorIntro} is applied, the order of the matrices of the families $\mathcal{M}$ and $\mathcal{N}$ may affect the obtained bound.
This turns out to be crucial when the Boolean ranks of the matrices in the families are quite different from one another.
For example, if a matrix $A$ is $(a_1,a_2,a_3)$-coverable then Corollary~\ref{cor:RankAxB_3} implies that
\[\RB(A \otimes A) \leq a_1^2 + a_2^2 + a_3^2.\]
However, the matrix $A$ is also, say, $(a_2,a_1,a_3)$-coverable. Hence, the same corollary also gives
\[\RB(A \otimes A) \leq 2 a_1 a_2 + a_3^2,\]
which might sometimes provide a better upper bound on  $\RB(A \otimes A)$.

\subsection{Lower Bound}\label{sec:lower}

The following lemma shows that Theorem~\ref{theorem:RankTensorIntro} is tight, in the sense that for every two $0,1$-matrices there exist families of matrices
as in the theorem, attaining a tight upper bound on the Boolean rank of their Kronecker product.

\BL
\label{lemma:lower}
Let $A$ and $B$ be two $0,1$-matrices, and let $s = \RB(A \otimes B)$.
Then, there exist two families of $0,1$-matrices of Boolean rank $1$, $\mathcal{M} = \{M_{1},...,M_{s}\}$ and $\mathcal{N} = \{N_{1},...,N_{s}\}$, such that
\begin{enumerate}
  \item $\mathcal{M}$ is a cover of $A$, and
  \item for every $i,j$ such that $A_{i,j}=1$, the matrices $N_t$ for which ${(M_t)}_{i,j} = 1$ form a cover of $B$.
\end{enumerate}
\EL

\BPF
Since $s = \RB(A \otimes B)$, it follows that the matrix $A \otimes B$ has a cover of $s$ matrices $P_1,\ldots,P_s$ of Boolean rank $1$.
By Claim~\ref{claim:rectangles}, for every $t \in [s]$ there exist two $0,1$-matrices $M_t$ and $N_t$ of Boolean rank $1$, satisfying $M_t \preceq A$, $N_t \preceq B$, and $P_t \preceq M_t \otimes N_t$.
It thus follows that the matrices $M_t \otimes N_t$ with $t \in [s]$ also form a cover of $A \otimes B$, that is, $A \otimes B = \sum_{t=1}^{s}{(M_t \otimes N_t)}$, where the sum is under Boolean arithmetic.

We claim that the families $\mathcal{M} = \{M_{1},...,M_{s}\}$ and $\mathcal{N} = \{N_{1},...,N_{s}\}$ satisfy the assertion of the lemma.
Indeed, the  $i,j$'th block of the matrix $A \otimes B$ is equal to the sum of the matrices $N_t$ for which ${(M_t)}_{i,j} = 1$.
This implies that $\mathcal{M}$ is a cover of $A$, because $M_t \preceq A$ for all $t \in [s]$, and because the blocks of $A \otimes B$ that correspond to $1$-entries of $A$ are equal to $B$.
For the same reason, if $A_{i,j}=1$ then the matrices $N_t$ for which ${(M_t)}_{i,j} = 1$ form a cover of $B$, as required.
\EPF\\

We proceed by presenting a lower bound on the Boolean rank of Kronecker products of $0,1$-matrices.
Here, for a nonzero $0,1$-matrix $A$, we let $\mu(A)$ stand for the number of $1$-entries of $A$ divided by the largest number of $1$-entries in a monochromatic combinatorial rectangle of $A$.

\BT\label{thm:lower}
Let $A$ and $B$ be two nonzero $0,1$-matrices. Then,
\[ \RB(A \otimes B) \geq \mu(A) \cdot \RB(B). \]
\ET
\BPF
For two given nonzero $0,1$-matrices $A$ and $B$, let $s = \RB(A \otimes B)$.
By Lemma~\ref{lemma:lower}, there exist two families of $0,1$-matrices of Boolean rank $1$, $\mathcal{M} = \{M_{1},...,M_{s}\}$ and $\mathcal{N} = \{N_{1},...,N_{s}\}$, satisfying the conditions of the lemma.
For every $i,j$ such that $A_{i,j}=1$, the matrices $N_t$ for which ${(M_t)}_{i,j} = 1$ form a cover of $B$, so,
in particular, the number of these matrices is at least $\RB(B)$. This implies that every $1$-entry of $A$ is covered by at least $\RB(B)$ of the matrices of $\mathcal{M}$. Hence, the total number of ones in the matrices of $\mathcal{M}$ is at least the number of $1$-entries of $A$ times $\RB(B)$. On the other hand, this quantity is clearly bounded from above by $s$ times the largest number of $1$-entries in a monochromatic combinatorial rectangle of $A$. This implies that $s \geq \mu(A) \cdot \RB(B)$, as desired.
\EPF

\begin{remark}
We note that when $A$ is the adjacency matrix of an edge-transitive graph, the quantity $\mu(A)$ coincides with the fractional variant of the Boolean rank (see, e.g.,~\cite{HajiabolhassanM12}).

\end{remark}

As an immediate corollary, we obtain the following.

\begin{coro}\label{cor:lower}
For all integers $n,m \geq 1$, it holds that
\[\RB(C_n \otimes C_m) \geq \frac{n(n-1)}{\lceil{n/2}\rceil \cdot \lfloor{n/2}\rfloor} \cdot \sigma(m).\]
In particular,
\[ \RB(C_n \otimes C_n) \geq (4-o(1)) \cdot \sigma(n). \]
\end{coro}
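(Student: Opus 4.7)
The plan is to derive Corollary~\ref{cor:lower} as a direct application of Theorem~\ref{thm:lower} with $A=C_n$ and $B=C_m$. Since Lemma~\ref{lem:R(C_n)} gives $\RB(C_m)=\sigma(m)$, the only thing left to pin down is the value of $\mu(C_n)$, i.e.\ the ratio between the number of $1$-entries of $C_n$ and the size of its largest monochromatic all-ones combinatorial rectangle.

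First I would count the $1$-entries of $C_n$, which is simply $n(n-1)$ since $C_n$ has zeros exactly on the diagonal. Next I would determine the largest all-ones rectangle. A rectangle $X\times Y$ with $X,Y\subseteq[n]$ consists entirely of ones in $C_n$ if and only if $X\cap Y=\emptyset$, because the only zeros of $C_n$ are the diagonal entries $(i,i)$. Subject to $X\cap Y=\emptyset$ (equivalently $|X|+|Y|\le n$), the product $|X|\cdot|Y|$ is maximized by splitting $n$ as evenly as possible, yielding $\lceil n/2\rceil\cdot\lfloor n/2\rfloor$. Plugging these into Theorem~\ref{thm:lower} immediately gives
\[\RB(C_n\otimes C_m)\ \ge\ \mu(C_n)\cdot\RB(C_m)\ =\ \frac{n(n-1)}{\lceil n/2\rceil\cdot\lfloor n/2\rfloor}\cdot\sigma(m),\]
which is the first assertion of the corollary.

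For the ``in particular'' part with $m=n$, I would just observe that $\lceil n/2\rceil\cdot\lfloor n/2\rfloor$ equals $n^2/4$ when $n$ is even and $(n^2-1)/4$ when $n$ is odd, so the leading coefficient simplifies to $4-4/n$ or $4n/(n+1)=4-4/(n+1)$ respectively. In both cases the coefficient is $4-o(1)$ as $n\to\infty$, giving $\RB(C_n\otimes C_n)\ge(4-o(1))\cdot\sigma(n)$.

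There is no real obstacle here: the whole argument is a one-line application of Theorem~\ref{thm:lower}, and the only mildly substantive step is identifying the maximal all-ones rectangle of $C_n$, which reduces to maximizing $|X|\cdot|Y|$ under $|X|+|Y|\le n$. I would keep the write-up short and make sure the even/odd split of $n$ is handled uniformly via the $\lceil\cdot\rceil,\lfloor\cdot\rfloor$ notation so that both the general bound and the asymptotic consequence follow cleanly.
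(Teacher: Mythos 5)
Your proof is correct and follows essentially the same route as the paper: compute $\mu(C_n)$ by counting the $n(n-1)$ ones of $C_n$ and identifying the largest all-ones rectangle as $\lceil n/2\rceil\cdot\lfloor n/2\rfloor$ (a rectangle $X\times Y$ is all-ones iff $X\cap Y=\emptyset$), then invoke Theorem~\ref{thm:lower} together with $\RB(C_m)=\sigma(m)$ from Lemma~\ref{lem:R(C_n)}. The paper states these same facts more tersely and leaves the asymptotic simplification implicit, but there is no substantive difference.
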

\BPF
Observe that the matrix $C_n$ has $n(n-1)$ $1$-entries and that the largest number of $1$-entries in a monochromatic combinatorial rectangle of $C_n$ is $\lceil{n/2}\rceil \cdot \lfloor{n/2}\rfloor$. Recall that by Lemma~\ref{lem:R(C_n)} we have $\RB(C_m) = \sigma(m)$, and apply Theorem~\ref{thm:lower} to complete the proof.
\EPF

\section{Explicit Covers of $C_n \otimes C_n$}\label{sec:Covers_Cn}

In this section we present our constructions of covers of the matrix $C_n \otimes C_n$.
To do so, we construct families of matrices that can be used to apply the method presented in the previous section.
We start with some notation that will be used throughout the section.
We then apply a special case of our method, with families of three matrices as in Corollary~\ref{cor:RankAxB_3},
to prove that for all $n \geq 7$ the Boolean rank of $C_n \otimes C_n$ is strictly smaller than the square of the Boolean rank of $C_n$.
Finally, we apply the more general Corollary~\ref{cor:RankAxB_k} to obtain explicit asymptotically nearly optimal covers of $C_n \otimes C_n$.

\subsection{Notation}

For a set $F \subseteq [k]$, denote $\overline{F} = [k] \setminus F$.
Let  $\mathcal{F} = (F_1, \ldots, F_n)$  be an ordered collection of $n$ distinct $\ell$-subsets of $[k]$ for $\ell = \lceil k/2 \rceil$,
and let $\mathcal{\overline{F}} = (\overline{F_1}, \ldots, \overline{F_n})$.
Denote by $B_{\mathcal{F}}$ the $0,1$-matrix of size $n \times n$, whose rows are indexed by the sets of $\mathcal{F} $ and whose columns by the sets
of $\mathcal{\overline{F}}$, such that $(B_{\mathcal{F}})_{i,j} = 1$ if and only if $F_i \cap \overline{F_j} \neq \emptyset$.
Observe that $B_{\mathcal{F}}$ is equal to the matrix $C_n$, because an $\ell$-subset and a $(k-\ell)$-subset of $[k]$ are disjoint if and only if one is the complement of the other.

It is easy to see that $R_{\mathbb{B}}(B_{\mathcal{F}}) \leq k$.
Indeed, for every $t \in [k]$, let $P_{\mathcal{F}}(t)$ be the $0,1$-matrix of size $n \times n$,
whose rows are indexed by the sets of $\mathcal{F} $ and whose columns by the sets of $\mathcal{\overline{F}}$,
such that $(P_{\mathcal{F}}(t))_{i,j}=1$ if and only if $t \in F_i \cap \overline{F_j}$.
Observe that the matrices $P_{\mathcal{F}}(1), \ldots, P_{\mathcal{F}}(k)$ have Boolean rank $1$ and that they form a cover of $B_{\mathcal{F}}$.
We refer to the matrix $P_{\mathcal{F}}(t)$ as the matrix that represents the intersections of the sets of $\mathcal{F}$ and $\mathcal{\overline{F}}$ at $t$.

\subsection{$\RB(C_n \otimes C_n)$ is strictly smaller than $\RB(C_n)^2$}\label{sec:strictly}

We turn to prove an upper bound on $\RB(C_n \otimes C_n)$ using Corollary~\ref{cor:RankAxB_3} and the ideas described briefly in Section~\ref{sec-outline}.
This requires us to construct a triple $A_1,A_2,A_3$ of matrices, such that every two of them cover the matrix $C_n$.
Note that this condition implies a limitation on the Boolean rank of the three matrices, as the sum of the Boolean rank of every two of them must be at least $k$, where $k= \RB(C_n) = \sigma(n)$.

Our strategy for constructing the matrices is the following. Let $\mathcal{F}$ be an ordered collection of all $\ell$-subsets of $[k]$ for $\ell = \lceil k/2 \rceil$.
As explained in the previous subsection, the matrix $B_\mathcal{F}$ is equal to $C_n$ for $n = \binom{k}{\ell}$,
and the $k$ matrices $P_{\mathcal{F}}(1), \ldots, P_{\mathcal{F}}(k)$ have Boolean rank $1$ and form a cover of $C_n$.
Now, for any $r \geq 1$, consider the matrix $A_1$ defined as the sum of the first $r$ matrices from $P_{\mathcal{F}}(1), \ldots, P_{\mathcal{F}}(k)$,
and the matrix $A_2$ defined as the sum of the remaining $k-r$ matrices. This gives us two matrices $A_1$ and $A_2$ of Boolean rank $r$ and $k-r$, respectively, that together cover $C_n$.
We turn now to define a third matrix $A_3$, so that both the pairs $A_1,A_3$ and $A_2,A_3$ cover $C_n$ as well.

As mentioned earlier, if one permutes the sets in $\mathcal{F}$ by some bijection $h: \mathcal{F} \rightarrow \mathcal{F}$ to obtain a different ordered collection $\mathcal{F}'$,
then the matrix $B_{\mathcal{F}'}$ is also equal to $C_n$, whereas the cover associated with it, $P_{\mathcal{F}'}(1), \ldots, P_{\mathcal{F}'}(k)$, is different.
Hence, a possible way to obtain a matrix $A_3$ so that $A_1,A_3$ cover $C_n$, is to consider a bijection $h$ that {\em preserves the intersections} with $[r]$.
This means that the elements of $[r]$ play the same role in $\mathcal{F}$ and $\mathcal{F}'$. Therefore, the first $r$ matrices of the covers corresponding to $\mathcal{F}$ and $\mathcal{F}'$ coincide.
Letting $A_3$ be the sum of the last $k-r$  matrices in the cover that corresponds to $\mathcal{F}'$, guarantees that $A_1,A_3$ form a cover of $C_n$.

We still have to choose $h$ carefully so that $A_2,A_3$ will also cover $C_n$.
Since $A_1,A_2$ cover $C_n$ we have to verify that every entry that is covered by $A_1$ but not by $A_2$, meaning that the sets corresponding to its row and column in $B_{\mathcal{F}}$ intersect only inside $[r]$, will be covered by $A_3$. For this to happen, the bijection $h$ should permute the sets in a way that {\em shifts} every  intersection that fully lies in $[r]$ to an intersection that includes at least one of the other $k-r$ elements.
This essentially reduces the problem of proving an upper bound on the Boolean rank of $C_n \otimes C_n$ to that of finding a bijection $h$ that satisfies the aforementioned properties, defined formally below. See Figure~\ref{C10} for an illustration.

\begin{figure}[htb!]
\captionsetup{width=0.9\textwidth}
\centering
    \includegraphics[width=0.8\textwidth]{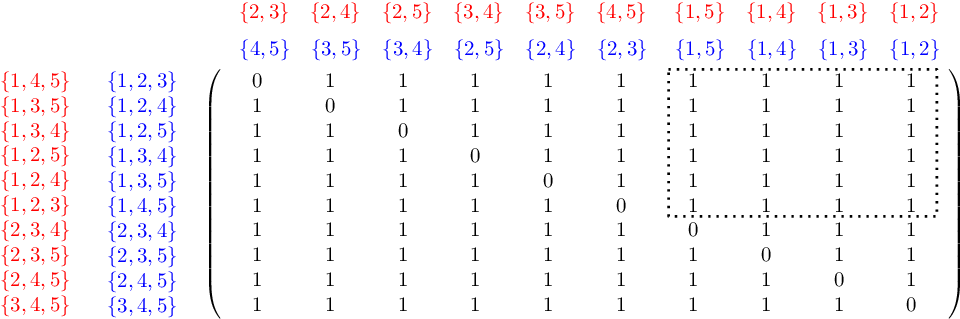}
\caption[center]{\small Two different covers of $C_{10}$. The sets in blue are those assigned by the ordered collection $\mathcal{F}$. The sets in red are assigned by the ordered collection $\mathcal{F}'$ obtained by applying the permutation $h$, defined by $h(F) = \overline{F \setminus \{1\}} \cup \{1\}$ if $1 \in F$, and $h(F) = F$ otherwise. Note that $h$ is $\{1\}$-preserving and $\{1\}$-intersection shifting, and that the highlighted rectangle is common to the two covers associated with $\mathcal{F}$ and $\mathcal{F}'$.}
\label{C10}
\end{figure}

\BD\label{def:h_inter_shift}
Let $\mathcal{F}$ be a family of subsets of $[k]$, let $L$ be a subset of $[k]$, and let $h: \mathcal{F} \rightarrow \mathcal{F}$ be a function.
\begin{enumerate}
\item
The function $h$ is {\em $L$-preserving} if it preserves the
intersections with $L$, that is, for every $D \in \mathcal{F}$, $D \cap L = h(D) \cap L$.
\item
The function $h$ is {\em $L$-intersection shifting} if for every $D,E \in \mathcal{F}$ such that
 $ \emptyset \neq (D \cap \overline{E}) \subseteq L$ it holds that $ (h(D) \cap \overline{h(E)}) \setminus L \neq \emptyset.$
\end{enumerate}
\ED

The following theorem formally shows how bijections that satisfy the properties given in Definition~\ref{def:h_inter_shift} can be used to obtain upper bounds on the Boolean rank of $C_n \otimes C_n$.

\BT\label{thm:Cn_general}
Let $k \geq r \geq 1$ be integers, and let $\mathcal{F}$ be a family of $n$ distinct $\ell$-subsets of $[k]$ for $\ell = \lceil k/2 \rceil$.
Suppose that there exists an $[r]$-preserving and $[r]$-intersection shifting bijection $h: \mathcal{F} \rightarrow \mathcal{F}$.
Then, the matrix $C_n$ is $(r,k-r,k-r)$-coverable. In particular, $R_{\mathbb{B}}(C_n \otimes C_n) \leq k^2-r^2$.
\ET

\BPF
We refer to the family $\mathcal{F}$ as an ordered collection $\mathcal{F} = (F_1, \ldots, F_n)$ of $n$ distinct $\ell$-subsets of $[k]$, where the order is arbitrary.
Recall that the matrix $B_{\mathcal{F}}$ is covered by the $k$ matrices $P_{\mathcal{F}}(1), \ldots, P_{\mathcal{F}}(k)$,
where $P_{\mathcal{F}}(t)$ is the matrix of Boolean rank $1$ that represents the intersections of the sets of $\mathcal{F}$ and $\overline{\mathcal{F}}$ at $t$.

Let $\mathcal{F}'$ be the ordered collection of the sets of $\mathcal{F}$ obtained by applying to every set the given bijection $h$.
Again, the matrix $B_{\mathcal{F}'}$ is covered by the $k$ matrices $P_{\mathcal{F}'}(1), \ldots, P_{\mathcal{F}'}(k)$.
Observe that both the matrices $B_{\mathcal{F}}$ and $B_{\mathcal{F}'}$ are in fact equal to $C_n$.
Since $h$ is $[r]$-preserving, every $t \in [r]$ plays the exact same role in the ordered collections $\mathcal{F}$ and $\mathcal{F}'$. Hence, $P_{\mathcal{F}}(t) = P_{\mathcal{F}'}(t)$ for every such $t$.

To prove that $C_n$ is $(r,k-r,k-r)$-coverable, consider the following three matrices:
\[A_1 = \sum_{t \in [r]}{P_{\mathcal{F}}(t)},~~~ A_2 = \sum_{t \in [k] \setminus [r]}{P_{\mathcal{F}}(t)},~~~ \mbox{~and~}~~ A_3 = \sum_{t \in [k] \setminus [r]}{P_{\mathcal{F}'}(t)},\]
where the sum is with respect to Boolean arithmetic.
It holds that $R_{\mathbb{B}}(A_1) \leq r$, $R_{\mathbb{B}}(A_2) \leq k-r$, and $R_{\mathbb{B}}(A_3) \leq k-r$, so it remains to show that every two of the matrices $A_1,A_2,A_3$ cover the matrix $C_n$.
By
\[A_1 = \sum_{t \in [r]}{P_{\mathcal{F}}(t)} = \sum_{t \in [r]}{P_{\mathcal{F}'}(t)},\]
each of the pairs $A_1,A_2$ and $A_1,A_3$ obviously covers $C_n$.

To prove that the pair $A_2,A_3$ covers $C_n$ as well, we show that every nonzero entry of $C_n$ that is uncovered by $A_2$ is covered by $A_3$.
So consider such an entry of $C_n$.
Let $D$ and $\overline{E}$, respectively, be the subsets that correspond to the row and column of this entry in the matrix $B_{\mathcal{F}}$.
Notice that the subsets $h(D)$ and $\overline{h(E)}$ correspond to the row and column of this entry in the matrix $B_{\mathcal{F}'}$.
Since our entry is uncovered by $A_2$, we have $\emptyset \neq (D \cap \overline{E}) \subseteq [r]$.
Since $h$ is $[r]$-intersection shifting, it follows that $(h(D) \cap \overline{h(E)}) \setminus [r] \neq \emptyset$, hence our entry is covered by $A_3$.
It follows that $C_n$ is $(r,k-r,k-r)$-coverable, as required.

Finally, the fact that $C_n$ is $(r,k-r,k-r)$-coverable implies that it is also $(k-r,r,k-r)$-coverable. By Corollary~\ref{cor:RankAxB_3}, we obtain that
\[ R_{\mathbb{B}}(C_n \otimes C_n) \leq r \cdot (k-r) + (k-r) \cdot r +(k-r)^2 = k^2-r^2,\]
and we are done.
\EPF\\

In order to prove that the Boolean rank of $C_n \otimes C_n$ is strictly smaller than $R_{\mathbb{B}}(C_n)^2$ for all $n \geq 7$, we apply Theorem~\ref{thm:Cn_general} with $r=1$.
The following two simple lemmas are used to obtain the bijection $h$ needed for applying the theorem.

\BL\label{lem:func_g}
Let $k \geq 2$ be an integer and denote $\ell = \lceil k/2 \rceil$.
Then, there exists a bijection $g$ from the family of all $(\ell-1)$-subsets of $[k-1]$ to itself satisfying $F \cap g(F) = \emptyset$ for every set $F$.
\EL

\BPF
If $k$ is odd then $k = 2\ell-1$, and the function $g$ defined by $g(F) = [k-1] \setminus F$ clearly satisfies the assertion of the lemma.

Otherwise, if $k$ is even, we have $k = 2\ell$.
Consider the bipartite graph whose vertices on one side are all the $(\ell-1)$-subsets of $[k-1]$,
and the vertices on the other side are all the $\ell$-subsets of $[k-1]$.
Connect by an edge an $(\ell-1)$-subset and an $\ell$-subset if the former is contained in the latter.
This bipartite graph is $\ell$-regular, and thus, by Hall's marriage theorem it has a perfect matching.

Now, for any $(\ell-1)$-subset $F$ of $[k-1]$, let $\widetilde{F}$ denote the $\ell$-subset of $[k-1]$ adjacent to $F$ in this prefect matching, and define $g(F) = [k-1] \setminus \widetilde{F}$.
The function $g$ is a bijection because it is a composition of two bijections (one defined by the prefect matching and one by the complement operation).
Moreover, for every $(\ell-1)$-subset $F$ of $[k-1]$ it holds that $F \subseteq \widetilde{F}$, and thus $F \cap g(F) = \emptyset$, as desired.
\EPF

\BL\label{lem:func_h_n>=7}
Let $k \geq 5$ be an integer, let $i \in [k]$, denote $\ell = \lceil k/2 \rceil$, and let $\mathcal{F}$ be the family of all $\ell$-subsets of $[k]$.
Then, there exists an $\{i\}$-preserving and $\{i\}$-intersection shifting bijection $h: \mathcal{F} \rightarrow \mathcal{F}$.
\EL

\BPF
By Lemma~\ref{lem:func_g}, there exists a bijection $g$ from the family of all $(\ell-1)$-subsets of $[k] \setminus \{i\}$ to itself satisfying $F \cap g(F) = \emptyset$ for every set $F$.
Let $h: \mathcal{F} \rightarrow \mathcal{F}$ be the function defined by
\[ h(F) = \left\{ \begin{array}{ll}
          g(F \setminus \{i\}) \cup \{i\}, & \mbox{if $i \in F$}.\\
       F, & \mbox{if $i \notin F $}.\end{array} \right. \]
It can easily be seen that $h$ is an $\{i\}$-preserving bijection from $\mathcal{F}$ to itself.

To prove that $h$ is $\{i\}$-intersection shifting, suppose that $D \cap \overline{E} = \{i\}$ for some $D,E \in \mathcal{F}$,
let $D' = D \setminus \{i\}$, and note that $D' \cap \overline{E} = \emptyset$.
By definition, $h(D) = g(D') \cup \{i\}$ and $h(E) = E$, and our choice of $g$ implies that $D' \cap g(D') = \emptyset$.
However, the three sets $D'$, $g(D')$ and $\overline{E}$ cannot be pairwise disjoint, because
\[|D'|+|g(D')|+|\overline{E}| = (\ell-1)+(\ell-1)+(k-\ell) = k+\ell-2 = k+\lceil k/2 \rceil -2 > k,\]
where for the inequality we use the assumption that $k \geq 5$.
This implies that $g(D') \cap \overline{E} \neq \emptyset$, and thus, $(h(D) \cap \overline{E}) \setminus \{i\} \neq \emptyset$, as required.
\EPF\\

Equipped with Theorem~\ref{thm:Cn_general} and Lemma~\ref{lem:func_h_n>=7}, we are ready to prove the following.

\BT\label{thm:1gap}
For every integer $n \geq 7$, the matrix $C_n$ is $(k-1,k-1,1)$-coverable for $k = \sigma(n)$.
In particular, for all integers $n,m \geq 7$, $R_{\mathbb{B}}(C_n \otimes C_m) < R_{\mathbb{B}}(C_n) \cdot \RB(C_m)$.
\ET

\BPF
For any given $k \geq 5$, let $\ell = \lceil k/2 \rceil$ and $n = \binom{k}{ \ell }$.
By Lemma~\ref{lem:R(C_n)}, $R_{\mathbb{B}}(C_n) = \sigma(n)=k$.
It suffices to prove the assertion of the theorem for such values of $n$, because every matrix $C_{n'}$
with $R_{\mathbb{B}}(C_{n'})=\sigma(n')=k$ is a sub-matrix of $C_n$, and because $\sigma(n') \geq 5$ whenever $n' \geq 7$.

Consider the collection $\mathcal{F}$ of all $\ell$-subsets of $[k]$.
By Lemma~\ref{lem:func_h_n>=7}, there exists a $\{1\}$-preserving and $\{1\}$-intersection shifting bijection $h: \mathcal{F} \rightarrow \mathcal{F}$.
Applying Theorem~\ref{thm:Cn_general} with $r=1$ and this $h$, we obtain that the matrix $C_n$ is $(1,k-1,k-1)$-coverable, as required.

Finally, for any integers $n,m \geq 7$, the matrix $C_n$ is $(1,k_1-1,k_1-1)$-coverable for $k_1 = \sigma(n)$ and the matrix $C_m$ is $(k_2-1,k_2-1,1)$-coverable for $k_2 = \sigma(m)$. Applying Corollary~\ref{cor:RankAxB_3}, we obtain that
\begin{eqnarray*}
R_{\mathbb{B}}(C_n \otimes C_m) &\leq& 1\cdot(k_2-1) + (k_1-1)(k_2-1) + (k_1-1)\cdot 1 = k_1 \cdot k_2 -1 \\
&=& \RB(C_n) \cdot \RB(C_m)-1 < \RB(C_n) \cdot \RB(C_m),
\end{eqnarray*}
and we are done.
\EPF\\

\subsubsection{Additional Examples}\label{subsub:examples}

We remark that Theorem~\ref{thm:1gap} does not provide useful covers of $C_n$ for $n \in \{5,6\}$.
While it follows from~\cite{watts2001boolean} that the matrix $C_4$ is $(2,2,2)$-coverable, it is not difficult to verify, using ideas similar to those of Theorem~\ref{thm:lower}, that $C_5$ is not $(2,2,2)$-coverable nor $(3,3,1)$-coverable.
Yet, it turns out that $C_5$ is $(2,2,3)$-coverable. To see this, assign to its rows and columns subsets of $[7]$ as follows.
\begin{equation*}
\kbordermatrix{                &   \{2,3,6\} &   \{2,4,5\} &   \{1,3,5\} &  \{1,4,6\} &   \{1,2,7\}  \\
                             \{1,4,5,7\}     & 0 & 1 & 1 & 1 & 1  \\
                             \{1,3,6,7\}     & 1 & 0 & 1 & 1 & 1  \\
                             \{2,4,6,7\}     & 1 & 1 & 0 & 1 & 1  \\
                             \{2,3,5,7\}     & 1 & 1 & 1 & 0 & 1  \\
                             \{3,4,5,6\}     & 1 & 1 & 1 & 1 & 0  }
\end{equation*}
Now, let $A_1$ be the matrix that represents the intersections of these sets at $1$ and $2$, let $A_2$ be the matrix that represents the intersections at $3$ and $4$, and let $A_3$ be the matrix that represents the intersections at $5$, $6$ and $7$. This gives us the following three matrices.
\begin{equation*}
A_1 = \left(
        \begin{array}{ccccc}
          0 & 0 & 1 & 1 & 1 \\
          0 & 0 & 1 & 1 & 1 \\
          1 & 1 & 0 & 0 & 1 \\
          1 & 1 & 0 & 0 & 1 \\
          0 & 0 & 0 & 0 & 0 \\
        \end{array}
      \right),~~~~
A_2 = \left(
        \begin{array}{ccccc}
          0 & 1 & 0 & 1 & 0 \\
          1 & 0 & 1 & 0 & 0 \\
          0 & 1 & 0 & 1 & 0 \\
          1 & 0 & 1 & 0 & 0 \\
          1 & 1 & 1 & 1 & 0 \\
        \end{array}
      \right),~~~~
A_3 = \left(
        \begin{array}{ccccc}
          0 & 1 & 1 & 0 & 1 \\
          1 & 0 & 0 & 1 & 1 \\
          1 & 0 & 0 & 1 & 1 \\
          0 & 1 & 1 & 0 & 1 \\
          1 & 1 & 1 & 1 & 0 \\
        \end{array}
      \right).
\end{equation*}
It is easy to verify that every two of them cover $C_5$, implying that $C_5$ is $(2,2,3)$-coverable.
Combining this with Corollary~\ref{cor:RankAxB_3} and Theorem~\ref{thm:1gap}, we get that for every $n \in \{4,5\}$ and $m \geq 7$,
it holds that $R_{\mathbb{B}}(C_n \otimes C_m) < R_{\mathbb{B}}(C_n) \cdot \RB(C_m)$, and that $R_{\mathbb{B}}(C_4 \otimes C_5) \leq 14$.
By Theorem~\ref{thm:lower}, the latter upper bound is tight, namely, $R_{\mathbb{B}}(C_4 \otimes C_5) = 14$.

We further remark that our approach is not limited to the matrices $C_n$ and might be useful for other matrices as well.
As an example, consider the $7 \times 7$ matrix $P_7$ that has zeros on the main diagonal and on the diagonal above it and ones everywhere else.
While $\RB(P_7)=6$ (see~\cite{CaenBicliques}), it can be shown that $P_7$ is $(5,5,1)$-coverable (see Figure~\ref{P7}).
By Corollary~\ref{cor:RankAxB_3}, this implies that $\RB(P_7 \otimes P_7) \leq 35 < \RB(P_7)^2$.

\begin{figure}[htb!]
\captionsetup{width=0.9\textwidth}
\centering
    \includegraphics[width=0.6\textwidth]{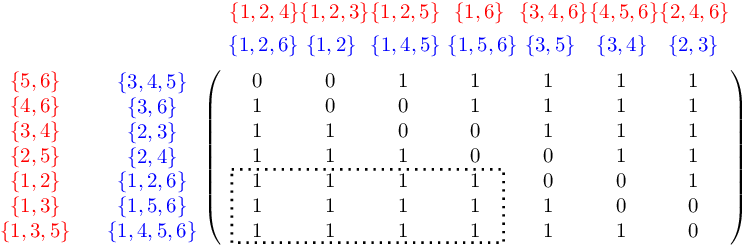}
\caption[center]{\small Two different covers of $P_{7}$ with a common rectangle.
Let $A_1$ be the matrix that represents the intersections of the red subsets assigned to the rows and columns of $P_7$ at $2,3,4,5,6$,
let $A_2$ be the matrix that represents the intersections of the blue subsets at $2,3,4,5,6$,
and let $A_3$ be the matrix that represents the intersections of the subsets at $1$.
Then each of the pairs $A_1,A_2$, $A_1, A_3$ and $A_2,A_3$ form a cover of $P_7$, and hence $P_7$ is $(5,5,1)$-coverable.}
\label{P7}
\end{figure}

\subsection{An Explicit Cover of $C_n \otimes C_n$}\label{sec:asymptotic}

We turn to provide an explicit construction of a cover for the matrix $C_n \otimes C_n$ of size nearly linear in the Boolean rank of $C_n$ (up to a $\log_2 \log_2 n$ multiplicative term).
We again mention that it is also possible to explicitly construct a cover of $C_n \otimes C_n$ without the $\log_2 \log_2 n$ loss in the bound~\cite{KushilevitzPersonal}.
For our construction, we need Corollary~\ref{cor:RankAxB_k} in its full generality rather than Corollary~\ref{cor:RankAxB_3} that was used in the previous subsections.
Our goal, then, is to construct a family of $s$ matrices such that every $\lceil s/2 \rceil$ of them cover $C_n$.
Letting $k =\RB(C_n)$, we would like the Boolean rank of every matrix in the collection to be close to $2k/s$, which is the minimum Boolean rank of the matrices we can hope for, given that every $\lceil s/2 \rceil$ of them cover $C_n$.

As before, we start with some ordered collection $\mathcal{F}$ of $\ell$-subsets of $[k]$ for $\ell = \lceil k/2 \rceil$, for which we consider the natural cover of $B_\mathcal{F}$ by the $k$ matrices $P_{\mathcal{F}}(1), \ldots, P_{\mathcal{F}}(k)$ of Boolean rank $1$. For any integer $r \geq 1$, one can associate with $\mathcal{F}$ the matrix $A = \sum_{t \in [r]}{P_{\mathcal{F}}(t)}$. Such a matrix can also be associated with every permutation $\mathcal{F}'$ of the sets of $\mathcal{F}$.
Our goal is to construct a family of $s$ permutations of $\mathcal{F}$ so that the matrices associated with every $\lceil s/2 \rceil$ of them cover $C_n$.

Our construction relies on the following lemma, which provides a family of functions from a set of size $p^q$ to a set of size $p$, where $p$ is a prime and $q$ is some integer.
For $q \geq 2$, these functions obviously cannot be injective and must have collisions, i.e., pairs of inputs that are mapped to the same output.
A crucial property that we need is that every $q$ of the functions do not share a common collision. This is obtained by a simple algebraic argument described below.

\BL\label{lem:g's_prime_new}
Let $q$ be an integer.
For every $q$ sets $U_1,\ldots,U_q$ of prime size $p > q$, there exists an explicit construction of $p-1$ functions
\[g_i:U_1 \times \cdots \times U_q \rightarrow U_1~~~~~ (i \in [p-1])\]
satisfying that
\begin{enumerate}
  \item for every $i \in [p-1]$ and every fixed $(x_2,\ldots,x_q) \in U_2 \times \cdots \times U_{q}$, the restriction of $g_i$ to the $q$-tuples $(x_1,x_2 \ldots, x_q)$ with $x_1 \in U_1$ is a bijection onto $U_1$, and
  \item for every distinct $i_1,\ldots,i_q \in [p-1]$ and every two $q$-tuples $x,y \in U_1 \times \cdots \times U_{q}$,
  \[\mbox{if~} g_{i_j}(x) = g_{i_j}(y) \mbox{~~for all~~} j \in [q] \mbox{~~then~~} x=y.\]
\end{enumerate}
\EL

\BPF
It suffices to prove the lemma for $U_1 = \cdots = U_q = \mathbb{Z}_p$.
For every $i \in \mathbb{Z}_p \setminus \{0\}$ consider the function $g_i:\mathbb{Z}^q_p \rightarrow \mathbb{Z}_p$ defined by
\[g_i(x) = \sum_{t=1}^{q}{i^{t-1} \cdot x_t}\]
for every $x\in \mathbb{Z}^q_p$.
It is easy to see that for every fixed $(x_2,\ldots,x_q) \in \mathbb{Z}_{p}^{q-1}$, the restriction of $g_i$ to the $p$ $q$-tuples $(x_1,x_2 \ldots, x_q)$ with $x_1 \in \mathbb{Z}_p$ is a linear function in $x_1$, where the coefficient of $x_1$ is $1$. Hence, it is a bijection from $\mathbb{Z}_p$ to itself.

To prove the other required property, take distinct $i_1,\ldots,i_q \in [p-1]$ and $x,y \in \mathbb{Z}^{q}_p$, and suppose that $g_{i_j}(x) = g_{i_j}(y)$ for all $j \in [q]$. This implies that for every $j \in [q]$ it holds that
\[ \sum_{t=1}^{q}{{i_j}^{t-1} \cdot x_t} = \sum_{t=1}^{q}{{i_j}^{t-1} \cdot y_t},\]
which is equivalent to
\[ \sum_{t=1}^{q}{{i_j}^{t-1} \cdot (x_t-y_t)} = 0.\]
The matrix corresponding to this system of linear equations, with variables $x_t-y_t$, is precisely the Vandermonde matrix associated with the values $i_1, \ldots, i_q$.
Since they are distinct, the matrix is invertible and the zero vector is the unique solution of the system.
This implies that $x=y$, completing the proof.
\EPF\\

The functions given by Lemma~\ref{lem:g's_prime_new} are used to construct a family of $0,1$-matrices such that every $q$ of them cover the matrix $C_n$.

\BL\label{lemma:gen_l_cover}
For a sufficiently large integer $d$ and for an integer $q \geq 2$, let
\[n = \bigg (\frac{1}{2} \cdot \binom{2d}{d} \bigg )^q.\]
Then, there exists an explicit construction of $n^{1/q}-1$  matrices of size $n \times n$  and of Boolean rank at most $2d$, such that every $q$ of them cover $C_n$.
\EL

\BPF
Given a sufficiently large integer $d$ and an integer $q \geq 2$, let $p$ be the largest prime number satisfying $p \leq \binom{2d}{d}$.
It is well known that for every sufficiently large integer $m$, the interval $[m,2m]$ includes a prime number, hence $p \geq \frac{1}{2} \cdot \binom{2d}{d}$.
We turn to present an explicit construction of $p-1$   matrices of size $p^{q} \times p^{q}$ and of Boolean rank at most $2d$,
such that every $q$ of them cover the matrix $C_{p^{q}}$. This will imply the assertion of the lemma.

Let $k = 2dq$, and let $[k] = L_1 \cup \cdots \cup L_q$ be a partition of $[k]$ into $q$ disjoint sets of size $2d$ each.
For every $j \in [q]$, let $\mathcal{B}_j$ be a collection of $p$ arbitrarily chosen $d$-subsets of $L_j$.
Let $\mathcal{F}$ be an arbitrary ordering of all subsets $B$ of $[k]$ satisfying $B \cap L_j \in \mathcal{B}_j$ for all $j \in [q]$.
Note that the size of every set in $\mathcal{F}$ is $dq=k/2$, and that the number of sets in $\mathcal{F}$ is $\prod_{j \in [q]}{|\mathcal{B}_j|} = p^q$.

Since $|\mathcal{B}_j|=p$ for all $j \in [q]$, Lemma~\ref{lem:g's_prime_new} provides us with $p-1$ functions
\[g_i:\mathcal{B}_1 \times \cdots \times \mathcal{B}_q \rightarrow \mathcal{B}_1~~~~~ (i \in [p-1])\]
satisfying that
\begin{enumerate}
  \item\label{property1_p} for every $i \in [p-1]$ and every fixed $(x_2,\ldots,x_q) \in \mathcal{B}_2 \times \cdots \times \mathcal{B}_{q}$, the restriction of $g_i$ to the $q$-tuples $(x_1,x_2 \ldots, x_q)$ with $x_1 \in \mathcal{B}_1$ is a bijection onto $\mathcal{B}_1$, and
  \item\label{property2_p} for every distinct $i_1,\ldots,i_q \in [p-1]$ and every two $q$-tuples $x,y \in \mathcal{B}_1 \times \cdots \times \mathcal{B}_{q}$,
  \[\mbox{if~} g_{i_j}(x) = g_{i_j}(y) \mbox{~~for all~~} j \in [q] \mbox{~~then~~} x=y.\]
\end{enumerate}
For every $i \in [p-1]$ we define a bijection $h_i: \mathcal{F} \rightarrow \mathcal{F}$ as follows.
For a subset $D \in \mathcal{F}$, let $D = D_1 \cup \cdots \cup D_q$ where $D_j = D \cap L_j$ ($j \in [q]$), and define
\[h_i(D) = g_i(D_1,\ldots,D_q) \cup D_2 \cdots \cup D_{q}.\]
Observe that for a given subset $D \in \mathcal{F}$, the function $h_i$ does not change the elements of $D$ outside of $L_1$.
The elements of $D$ in $L_1$, however, are changed according to the function $g_i$ applied to $(D_1, \ldots,D_q)$.
It follows from Item~\ref{property1_p} that every $h_i$ is a bijection from $\mathcal{F}$ to itself.
Let $\mathcal{F}_i$ be the ordered collection obtained from $\mathcal{F}$ by applying the bijection $h_i$ to its sets.

Now, for every $i \in [p-1]$, consider the matrix
\[ A_i = \sum_{t \in L_1}{P_{\mathcal{F}_i}(t)}.\]
The Boolean rank of each of these $p-1$ matrices is clearly at most $|L_1| = 2d$.
We turn to show that every $q$ of them cover $C_{p^q}$.
To do so, consider an arbitrary entry of the matrix $C_{p^q}$.
Let $D$ and $\overline{E}$ be the subsets that correspond, respectively, to the row and column of this entry, where $C_{p^q}$ is viewed as $B_\mathcal{F}$.
Let $D = D_1 \cup \cdots \cup D_q$ and $E = E_1 \cup \cdots \cup E_q$, where $D_j = D \cap L_j$ and $E_j = E \cap L_j$ for $j \in [q]$.
Suppose that for some distinct $i_1,\ldots,i_q \in [p-1]$ this entry is not covered by $A_{i_j}$ for all $j \in [q]$.
This means that for all $j \in [q]$, it holds that
\[h_{i_j}(D) \cap \overline{h_{i_j}(E)} \cap L_1 = \emptyset,\]
equivalently,
\[g_{i_j}(D_1,\ldots,D_q) \cap (L_1 \setminus g_{i_j}(E_1,\ldots,E_q)) = \emptyset.\]
Since all the sets in $\mathcal{B}_1$ are of size precisely $d$, it follows that $g_{i_j}(D_1 ,\ldots, D_q) = g_{i_j}(E_1,\ldots,E_q)$ for all $j \in [q]$, hence, Item~\ref{property2_p} implies that $D=E$.
This means that for every distinct $i_1,\ldots,i_q \in [p-1]$ the only entries that are uncovered by the matrices $A_{i_j}$ with $j \in [q]$ are those whose row and column correspond to complement sets.
Hence, they cover $C_{p^q}$ and we are done.
\EPF\\

We are now ready to obtain our explicit nearly optimal construction of a cover for $C_n \otimes C_n$.
Lemma~\ref{lemma:gen_l_cover} provides a collection of matrices such that every $q$ of them cover $C_n$.
To fit the condition of Corollary~\ref{cor:RankAxB_k} we need at least $2q$ such matrices.
This leads us to the setting of parameters with $q = \Theta(k/\log k)$, where $k = \sigma(n)$, as shown below.

\BT\label{thm:mlogm}
There exists a constant $c>0$ for which the following holds.
Given an integer $n$, it is possible to construct deterministically in running-time polynomial in $n$, a cover of the matrix $C_n \otimes C_n$
with at most $c \cdot k \log_2 k$ monochromatic rectangles, where $k = \RB(C_n)$.
\ET

\BPF
We first consider integers $n$ of the form $n = \binom{k}{k/2}$ where $k$ is even.
By Lemma~\ref{lem:R(C_n)}, we have $\RB(C_n) = k$.
Let $d$ be the smallest integer satisfying $n \leq n_d$, where
\[ n_d = \bigg ( \frac{1}{2} \cdot \binom{2d}{d} \bigg )^{2^d}.\]
Note that it can be assumed that the integer $d$, just like the integer $n$, is sufficiently large.
By the minimality of $d$, it follows that $n > n_{d-1}$, hence
\[2^k \geq n > n_{d-1} = \bigg ( \frac{1}{2} \cdot \binom{2(d-1)}{d-1} \bigg )^{2^{d-1}} \geq \bigg ( \frac{2^{2d-4}}{\sqrt{d-1}} \bigg )^{2^{d-1}},\]
where the last inequality holds by Fact~\ref{fact:central_binom}. It thus follows that
\begin{eqnarray}\label{eq:k_vs_d}
k \geq 2^{d-1} \cdot \Big ( 2d- 4-\tfrac{1}{2} \cdot \log_2 (d-1) \Big ) = \Theta(2^d \cdot d).
\end{eqnarray}

Now, apply Lemma~\ref{lemma:gen_l_cover} with $q = 2^d$ to obtain a collection of $0,1$-matrices of size $n_d \times n_d$ and of Boolean rank at most $2d$,
such that every $2^d$ of them cover $C_{n_d}$. The number of matrices in this collection, according to the lemma, is
\[ n_d^{1/2^d}-1 = \frac{1}{2} \cdot \binom{2d}{d} - 1 \geq 2^{d+1},\]
where for the inequality we again use the fact that $d$ is sufficiently large.
Applying Corollary~\ref{cor:RankAxB_k} to $2^{d+1}$ of these matrices, every $2^d$ of which cover $C_{n_d}$, we obtain that
\[R_{\mathbb{B}}(C_{n_d} \otimes C_{n_d}) \leq 2^{d+1} \cdot (2d)^2 = 2^{d+3} \cdot d^2.\]
Since $n \leq n_d$, the matrix $C_n$ is a sub-matrix of $C_{n_d}$, implying that
\[R_{\mathbb{B}}(C_n \otimes C_n) \leq R_{\mathbb{B}}(C_{n_d} \otimes C_{n_d}) \leq 2^{d+3} \cdot d^2 \leq O(k \cdot  \log_2 k),\]
where the last inequality holds by~\eqref{eq:k_vs_d}.

We next observe that if the bound holds for integers $n$ of the form $n = \binom{k}{k/2}$, where $k$ is even, then it holds in general.
Indeed, for every $n'$ with even $k=\sigma(n')$, $C_{n'}$ is a sub-matrix of $C_n$ for $n = \binom{k}{k/2}$ and it holds that $\sigma(n)=\sigma(n')$.
Further, for $n'$ with odd $k=\sigma(n')$, $C_{n'}$ is a sub-matrix of $C_n$ for $n = \binom{k+1}{ (k+1)/2}$ and $\sigma(n)=\sigma(n')+1$. So for an appropriate constant $c$ in the theorem, the statement holds for such $n'$ as well.

We finally mention that one can observe that the proof provides not only a bound on the Boolean rank of $C_n \otimes C_n$, but also an explicit construction of a cover attaining it. To see this, recall that Lemma~\ref{lemma:gen_l_cover} explicitly defines the matrices used above, based on the algebraic argument of Lemma~\ref{lem:g's_prime_new}. Moreover, the construction provides a presentation of those matrices as a sum of matrices of Boolean rank $1$. As follows from the proof of Theorem~\ref{theorem:RankTensorIntro}, products of these matrices (of Boolean rank $1$ each) can be used to obtain the required cover of $C_n \otimes C_n$ (see Remark~\ref{remark:explicitThm1}).
This implies that the monochromatic combinatorial rectangles attaining the above bound can be computed deterministically in running-time polynomial in $n$, as required.
\EPF

\section{The Rank of Products of Spanoids}\label{sec:spanoids}

A recent work of Dvir, Gopi, Gu, and Wigderson~\cite{DvirGGW20} introduces the concept of {\em spanoids} that captures various mathematical objects of interest.
In this section we show that the approach taken in the current work for proving upper bounds on the Boolean rank of Kronecker products of matrices can be generalized to proving upper bounds on the rank of products of spanoids. We start with some basic definitions. For an in-depth introduction to the topic, the reader is referred to~\cite{DvirGGW20}.

\subsection{Definitions}
A {\em spanoid} $\mathcal{S}$ over a finite set $U$ is a family of pairs $(S,i)$ with $S \subseteq U$ and $i \in U$.
A pair $(S,i) \in \mathcal{S}$ is referred to as an {\em inference rule} and is read as ``$S$ spans $i$''.
It can be assumed that for all $i \in U$, the pair $(\{i\},i)$ is in $\mathcal{S}$ and that monotonicity holds, that is, if $(S,i) \in \mathcal{S}$ then $(S',i) \in \mathcal{S}$ whenever $S \subseteq S' \subseteq U$.
We note that the representation of a spanoid as a collection of pairs is not unique.

A {\em derivation} in $\mathcal{S}$ of $i \in U$ from $T \subseteq U$, written $T \models_\mathcal{S} i$, is a sequence of sets $T_0,T_1,\ldots,T_r \subseteq U$, where $T=T_0$ and $i \in T_r$,
such that for each $j \in [r]$, $T_j = T_{j-1} \cup \{i_j\}$ for some $i_j \in U$
and there exists $S \subseteq T_{j-1}$ for which $(S,i_j) \in \mathcal{S}$.
When the spanoid $\mathcal{S}$ is clear from the context, we simply write $T \models i$.

The {\em span} of a set $T \subseteq U$ in the spanoid $\mathcal{S}$, denoted $\span(T)$, is the set of all $i \in U$ for which $T \models_\mathcal{S} i$.
Finally, the {\em rank} of $\mathcal{S}$, denoted $R(\mathcal{S})$, is the smallest possible size of a set $S \subseteq U$ that spans $U$, that is, satisfies $\span(S)=U$.

\subsection{Boolean Rank as a Spanoid Rank}
We observe that the Boolean rank of $0,1$-matrices can be represented in the language of spanoids.
To see this, consider the following definition.

\BD\label{def:S_A}
For a $0,1$-matrix $A$, let $U_A$ be the collection of all $0,1$-matrices $M$ of Boolean rank $1$ satisfying $M \preceq A$.
Note that $U_A$ in particular includes all the matrices that have a single $1$ that corresponds to a nonzero entry of $A$.

The {\em spanoid associated with a matrix $A$}, denoted $\mathcal{S}_A$, is the spanoid over the set $U_A$,
where a collection $S \subseteq U_A$ of matrices spans a matrix $M \in U_A$ if $M \preceq \sum_{M' \in S}{M'}$ under Boolean arithmetic,
that is, every nonzero entry of $M$ is also nonzero in at least one of the matrices of $S$.
\ED
It is easy to verify that the rank $\R(\mathcal{S}_A)$ of the spanoid $\mathcal{S}_A$ associated with a $0,1$-matrix $A$ coincides with $R_{\mathbb{B}}(A)$, the Boolean rank of the matrix $A$.

\begin{propo}
For every $0,1$-matrix $A$, $R(\mathcal{S}_A) = R_{\mathbb{B}}(A)$.
\end{propo}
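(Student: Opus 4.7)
The plan is to establish the two inequalities $R(\mathcal{S}_A) \le \RB(A)$ and $\RB(A) \le R(\mathcal{S}_A)$ separately, working directly from the definition of $\mathcal{S}_A$ given in Definition~\ref{def:S_A}.

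For the upper bound $R(\mathcal{S}_A) \le \RB(A)$, I would start from an optimal cover $P_1,\ldots,P_k$ of $A$ by matrices of Boolean rank $1$, where $k = \RB(A)$. By construction each $P_t$ lies in $U_A$. For any $M \in U_A$ we have $M \preceq A = \sum_{t=1}^{k} P_t$ under Boolean arithmetic, which is precisely the condition for the pair $(\{P_1,\ldots,P_k\}, M)$ to be an inference rule of $\mathcal{S}_A$. Hence every $M \in U_A$ is derived from $\{P_1,\ldots,P_k\}$ in a single step, so this set spans all of $U_A$ and the inequality follows.

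The reverse inequality requires an auxiliary observation. Let $\{Q_1,\ldots,Q_s\} \subseteq U_A$ be a spanning set with $s = R(\mathcal{S}_A)$. The key claim I would establish is the following monotonicity lemma: for every $T \subseteq U_A$ and every $N \in U_A$ with $T \models_{\mathcal{S}_A} N$, one has $N \preceq \sum_{M' \in T} M'$ under Boolean arithmetic. This is proved by induction on the length $r$ of the derivation $T = T_0, T_1, \ldots, T_r$ with $N \in T_r$. The base case $r = 0$ is immediate since $N \in T$ itself. For the inductive step, if $N$ is the element introduced at step $r$ via an inference rule $(S', N) \in \mathcal{S}_A$ with $S' \subseteq T_{r-1}$, then the definition of $\mathcal{S}_A$ gives $N \preceq \sum_{M' \in S'} M'$; applying the inductive hypothesis to each $M' \in S' \subseteq T_{r-1}$ and using the fact that $\preceq$ is preserved under Boolean sums combines these bounds into $N \preceq \sum_{M' \in T} M'$.

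To finish the argument, for every nonzero entry $(a,b)$ of $A$ I consider the matrix $M_{a,b} \in U_A$ having a single $1$ at position $(a,b)$; this matrix has Boolean rank $1$ and satisfies $M_{a,b} \preceq A$. Since $\{Q_1,\ldots,Q_s\}$ spans $U_A$, the monotonicity lemma yields $M_{a,b} \preceq \sum_{i=1}^{s} Q_i$, so at least one $Q_i$ has a $1$ at position $(a,b)$. Combined with the inclusions $Q_i \preceq A$, this gives the equality $A = \sum_{i=1}^{s} Q_i$, exhibiting a cover of $A$ by $s$ matrices of Boolean rank $1$ and hence $\RB(A) \le s$. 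The two inequalities together give the equality, completing the proof. The main (rather mild) obstacle is the inductive monotonicity lemma; the remaining ingredients are immediate from the definition of $\mathcal{S}_A$.
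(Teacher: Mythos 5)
Your proof is correct, and it fills in the verification that the paper leaves implicit (the proposition is preceded only by ``It is easy to verify that\dots'' and carries no proof body). The essential insight, which you isolate as the monotonicity lemma, is that for the spanoid $\mathcal{S}_A$ the span operation adds nothing beyond single-step inferences: $T \models_{\mathcal{S}_A} N$ holds if and only if $N \preceq \sum_{M' \in T} M'$, so a spanning set of $\mathcal{S}_A$ is exactly a collection of rank-$1$ matrices below $A$ whose Boolean sum equals $A$, i.e.\ a cover. Your induction on derivation length establishes the nontrivial direction of this equivalence cleanly, and the restriction to the single-entry matrices $M_{a,b}$ correctly converts a spanning set into a cover of $A$; this is precisely the routine argument the authors had in mind.
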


\subsection{Product of Spanoids}
Several product operations of spanoids were defined in~\cite{DvirGGW20}. Here we consider the one inspired by the Kronecker product operation on matrices.
\BD\label{def:S_1xS_2}
Let $\mathcal{S}_1$ and $\mathcal{S}_2$ be two spanoids on the sets $U_1$ and $U_2$ respectively.
The {\em product} $\mathcal{S}_1 \otimes \mathcal{S}_2$ is a spanoid on $U_1 \times U_2$ defined by the following inference rules:
\begin{enumerate}
  \item For $S \subseteq U_1$ and $i \in U_1$, if $(S,i) \in \mathcal{S}_1$ then for every $j \in U_2$, $(S \times \{j\}, (i,j)) \in \mathcal{S}_1 \otimes \mathcal{S}_2$.
  \item For $S \subseteq U_2$ and $j \in U_2$, if $(S,j) \in \mathcal{S}_2$ then for every $i \in U_1$, $(\{i\} \times S, (i,j)) \in \mathcal{S}_1 \otimes \mathcal{S}_2$.
\end{enumerate}
\ED
It can be seen that the rank of spanoids is sub-multiplicative with respect to this notion of product, that is, for every two spanoids $\mathcal{S}_1$ and $\mathcal{S}_2$, it holds that $R(\mathcal{S}_1 \otimes \mathcal{S}_2) \leq R(\mathcal{S}_1) \cdot R(\mathcal{S}_2)$. This inequality is sometimes strict, as demonstrated by two examples given in~\cite[Example~6.15]{DvirGGW20}.

We observe below that the rank of the product of spanoids associated with $0,1$-matrices, as in Definition~\ref{def:S_A}, is equal to the Boolean rank of the Kronecker product of the matrices.
This in particular implies that all the explicit gaps obtained in the current work between the Boolean rank of the Kronecker product of two matrices and the product of the Boolean rank of each of them, can also be formulated as explicit gaps for the analogue question for spanoids.

\begin{propo}\label{prop:R_bool_vs_R}
For every two $0,1$-matrices $A$ and $B$, $R_{\mathbb{B}}(A \otimes B) = R(\mathcal{S}_A \otimes \mathcal{S}_B)$.
\end{propo}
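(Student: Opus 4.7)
The plan is to establish the equality by proving the two inequalities separately, each time exploiting the fact that, by Definition~\ref{def:S_A}, the inference relation $\models$ in $\mathcal{S}_A$ coincides with the Boolean-sum containment $M \preceq \sum_{M' \in S} M'$ (and similarly for $\mathcal{S}_B$).

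For the direction $R(\mathcal{S}_A \otimes \mathcal{S}_B) \leq \RB(A \otimes B)$, the plan is as follows. Start from a cover of $A \otimes B$ by $k = \RB(A \otimes B)$ rank-$1$ matrices $P_1,\dots,P_k$ and apply Claim~\ref{claim:rectangles} to each $P_t$ to produce $M_t \in U_A$ and $N_t \in U_B$ with $P_t \preceq M_t \otimes N_t$. The family $\{M_t \otimes N_t\}_{t=1}^{k}$ is then itself a cover of $A \otimes B$. It remains to show that $T = \{(M_t,N_t) : t \in [k]\}$ spans $U_A \times U_B$ in the product spanoid. Fix an arbitrary $(P,Q) \in U_A \times U_B$, written as the rank-$1$ matrices $P = \mathbf{1}_X \mathbf{1}_Y^T$ and $Q = \mathbf{1}_{X'} \mathbf{1}_{Y'}^T$. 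For every $(i_A,j_A) \in X \times Y$ and $(i_B,j_B) \in X' \times Y'$ the corresponding entry of $A \otimes B$ equals $1$, so some $t$ satisfies $(M_t)_{i_A,j_A} = (N_t)_{i_B,j_B} = 1$; letting $E^A_{i_A,j_A}$ and $E^B_{i_B,j_B}$ denote the single-entry matrices, rule~$1$ of the product spanoid derives $(E^A_{i_A,j_A}, N_t)$ from $(M_t,N_t)$, and rule~$2$ derives $(E^A_{i_A,j_A}, E^B_{i_B,j_B})$. Having obtained every single-entry pair, rule~$1$ with $S = \{E^A_{i_A,j_A} : (i_A,j_A) \in X \times Y\}$ (for which $P = \sum_{M' \in S} M'$) and a fixed second coordinate $E^B_{i_B,j_B}$ produces $(P, E^B_{i_B,j_B})$, and a final application of rule~$2$ along the second coordinate assembles these into $(P,Q)$.

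For the reverse direction $\RB(A \otimes B) \leq R(\mathcal{S}_A \otimes \mathcal{S}_B)$, let $T = \{(M_t,N_t) : t \in [r]\}$ be a minimum spanning set in $\mathcal{S}_A \otimes \mathcal{S}_B$, where $r = R(\mathcal{S}_A \otimes \mathcal{S}_B)$. The plan is to prove, by induction on the length of the derivation, the semantic invariant: whenever $T \models_{\mathcal{S}_A \otimes \mathcal{S}_B} (P,Q)$, one has $P \otimes Q \preceq \sum_{t=1}^{r} M_t \otimes N_t$ under Boolean arithmetic. For the inductive step, if the last step uses rule~$1$ via a subset $S \times \{N_0\}$ of the previously derived pairs with $(S,P) \in \mathcal{S}_A$, then the inductive hypothesis gives $s \otimes N_0 \preceq \sum_t M_t \otimes N_t$ for every $s \in S$, hence $\bigl(\sum_{s \in S} s\bigr) \otimes N_0 \preceq \sum_t M_t \otimes N_t$ by bilinearity of the Kronecker product over Boolean sums; combined with $P \preceq \sum_{s \in S} s$, this gives $P \otimes N_0 \preceq \sum_t M_t \otimes N_t$, and the case of rule~$2$ is symmetric. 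Applying the invariant to each pair $(E^A_{i_A,j_A}, E^B_{i_B,j_B})$ with $A_{i_A,j_A} = B_{i_B,j_B} = 1$, which lies in $U_A \times U_B$ and hence in the span of $T$, shows that $\sum_{t} M_t \otimes N_t$ contains every $1$-entry of $A \otimes B$; together with $M_t \otimes N_t \preceq A \otimes B$ for all $t$, this produces a rank-$1$ cover of $A \otimes B$ of size $r$, so $\RB(A \otimes B) \leq r$.

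I expect the main obstacle to be the first direction, where one must convert a purely combinatorial Boolean cover into an explicit derivation in the product spanoid; the key ideas that resolve it are to first apply Claim~\ref{claim:rectangles} so that the cover factors as $\{M_t \otimes N_t\}$, then to derive every single-entry pair via one rule-$1$ and one rule-$2$ step, and finally to piece together any rank-$1$ pair $(P,Q)$ one coordinate at a time. The reverse direction reduces, by contrast, to a clean semantic invariant preserved under both inference rules of the product spanoid.
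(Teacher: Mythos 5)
Your proof is correct and follows the same two-inequality structure as the paper's, including the use of Claim~\ref{claim:rectangles} to factor each rank-$1$ matrix of a Boolean cover of $A\otimes B$ into a Kronecker product $M_t\otimes N_t$ and then deriving arbitrary rank-$1$ pairs in $U_A\times U_B$ via the single-entry pairs. The only notable stylistic difference is in the direction $\RB(A\otimes B)\le R(\mathcal{S}_A\otimes\mathcal{S}_B)$, where you establish a forward-induction invariant ($P\otimes Q\preceq\sum_t M_t\otimes N_t$ for every derived pair $(P,Q)$) instead of the paper's backward trace showing that any inference producing a pair dominating a given single-entry target must have a hypothesis dominating it as well; the two arguments are essentially equivalent.
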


\begin{proof}
We first show that $R_{\mathbb{B}}(A \otimes B) \leq R(\mathcal{S}_A \otimes \mathcal{S}_B)$.
Let $t = R(\mathcal{S}_A \otimes \mathcal{S}_B)$. Then there exists a set $S = \{(M_1,N_1), \ldots, (M_t,N_t)\}$ of $t$ pairs spanning the set $U_A \times U_B$ in the spanoid $\mathcal{S}_A \otimes \mathcal{S}_B$.
In particular, for every two matrices $D_A$ and $D_B$ with precisely one nonzero entry such that $D_A \preceq A$ and $D_B \preceq B$, it holds that $S \models (D_A,D_B)$ in $\mathcal{S}_1 \otimes \mathcal{S}_2$.
It suffices to show that for every such $D_A$ and $D_B$, the nonzero entry of $D_A \otimes D_B$ is covered by at least one of the $t$ matrices $M_i \otimes N_i$ with $i \in [t]$, because the Boolean rank of each of them is $1$, implying that $R_{\mathbb{B}}(A \otimes B) \leq t$.

To see this, observe, by combining Definitions~\ref{def:S_A} and~\ref{def:S_1xS_2}, that if for some $T \subseteq U_A \times U_B$ and $D=(D_1,D_2) \in U_A \times U_B$ it holds that $(T,(D_1,D_2)) \in \mathcal{S}_1 \otimes \mathcal{S}_2$ where $D_A \preceq D_1$ and $D_B \preceq D_2$, then $T$ must include a pair $(M,N)$ such that $D_A \preceq M$ and $D_B \preceq N$. Since there exists a derivation of $(D_A,D_B)$ from $S$, there must exist some $j \in [t]$ for which $D_A \preceq M_j$ and $D_B \preceq N_j$, hence $D_A \otimes D_B \preceq M_j \otimes N_j$, as required.

We next show that $R_{\mathbb{B}}(A \otimes B) \geq R(\mathcal{S}_A \otimes \mathcal{S}_B)$.
Let $t = R_{\mathbb{B}}(A \otimes B)$. Then there exists a collection of $t$ matrices $M^{(1)},\ldots,M^{(t)}$ of Boolean rank $1$ that covers $A \otimes B$.
By Claim~\ref{claim:rectangles}, for every $i \in [t]$ there are two matrices $M^{(i)}_A$ and $M^{(i)}_B$ of Boolean rank $1$ such that $M^{(i)}_A \preceq A$, $M^{(i)}_B \preceq B$, and $M^{(i)} \preceq M^{(i)}_A \otimes M^{(i)}_B$. In particular, the $t$ matrices $M^{(i)}_A \otimes M^{(i)}_B$ with $i \in [t]$ cover $A \otimes B$.
We claim that the collection $S$ of pairs $(M^{(i)}_A,M^{(i)}_B)$ with $i \in [t]$ spans the set $U_A \times U_B$ in the spanoid $\mathcal{S}_A \otimes \mathcal{S}_B$.
This will imply that $R(\mathcal{S}_A \otimes \mathcal{S}_B) \leq  t$, completing the proof.

To this end, it suffices to show that for every two matrices $D_A$ and $D_B$ with precisely one nonzero entry such that $D_A \preceq A$ and $D_B \preceq B$,
the pair $(D_A,D_B)$ can be derived from $S$ in $\mathcal{S}_A \otimes \mathcal{S}_B$, because these pairs can be used to derive every other pair in $U_A \times U_B$.
Let $I$ be the collection of indices $i \in [t]$ satisfying $D_B \preceq M^{(i)}_B$.
Observe that for all $i \in I$,
\begin{eqnarray}\label{eq:derivation}
((M^{(i)}_A,M^{(i)}_B),(M^{(i)}_A,D_B)) \in \mathcal{S}_A \otimes \mathcal{S}_B,
\end{eqnarray}
and that the matrices $M^{(i)}_A$ with $i \in I$ cover $A$.
The latter holds because otherwise the copy of $A$ in $A \otimes B$ that corresponds to the nonzero entry of $D_B$ is not covered by the matrices $M^{(i)}_A \otimes M^{(i)}_B$ with $i \in [t]$.
So there exists an $i \in I$ satisfying $D_A \preceq M^{(i)}_A$, for which it holds that $((M^{(i)}_A,D_B),(D_A,D_B)) \in \mathcal{S}_A \otimes \mathcal{S}_B$.
Hence, by~\eqref{eq:derivation}, $S \models (D_A,D_B)$ in $\mathcal{S}_A \otimes \mathcal{S}_B$, as required.
\end{proof}

\subsection{An Upper Bound on the Rank of Products of Spanoids}
The notion of spanoids is much more general than that of spanoids associated with $0,1$-matrices.
And yet, our technique for proving upper bounds on the Boolean rank of Kronecker products of matrices, as given in Theorem~\ref{theorem:RankTensorIntro}, naturally extends to the wider framework of spanoids. For potential future applications, we present below the analogue of Theorem~\ref{theorem:RankTensorIntro} for general spanoids with a quick proof.

\BT
Let $\mathcal{S}_1$ and $\mathcal{S}_2$ be two spanoids on the sets $U_1$ and $U_2$ respectively.
Let $M_1,\ldots,M_s \subseteq U_1$ and $N_1,\ldots,N_s \subseteq U_2$ be sets satisfying that for every $i \in U_1$ it holds that $\span(\bigcup_{j:i \in M_j}{N_j}) = U_2$.
Then,
\[R(\mathcal{S}_1 \otimes \mathcal{S}_2) \leq \sum_{t=1}^{s}{|M_t| \cdot |N_t|}.\]
\ET

\BPF
It suffices to show that the set $S = \bigcup_{t=1}^{s}{(M_t \times N_t)}$ spans $U_1 \times U_2$ in the product spanoid $\mathcal{S}_1 \otimes \mathcal{S}_2$, because this yields that
\[ R(\mathcal{S}_1 \otimes \mathcal{S}_2) \leq |S| \leq \sum_{t=1}^{s}{|M_t| \cdot |N_t|}.\]
To see this, consider an arbitrary pair $(i,j) \in U_1 \times U_2$. By assumption, the set $T=\bigcup_{j':i \in M_{j'}}{N_{j'}}$ spans $U_2$.
In particular, $T \models j$ in $\mathcal{S}_2$.
By the definition of $\mathcal{S}_1 \otimes \mathcal{S}_2$, it follows that $\{i\} \times T \models (i,j)$ in $\mathcal{S}_1 \otimes \mathcal{S}_2$.
By our definition of $S$, we have $\{i\} \times T \subseteq S$, and thus $(i,j) \in \span(S)$.
We have shown that $S$ spans $U_1 \times U_2$ in $\mathcal{S}_1 \otimes \mathcal{S}_2$, so we are done.
\EPF

\section*{Acknowledgements}
We are grateful to LeRoy B.~Beasley and Pauli Miettinen for their help with the literature and to Eyal Kushilevitz for useful discussions~\cite{KushilevitzPersonal}.
We are also grateful to the anonymous referees for valuable suggestions.

\bibliographystyle{plain}
\bibliography{rankbib}

\end{document}